\newtheorem{lemma}{Lemma}[section]
\theoremstyle{remark}
\DeclareMathOperator{\diag}{diag}
\DeclareMathOperator{\rank}{rank}
\DeclarePairedDelimiter\floor{\lfloor}{\rfloor}
\DeclarePairedDelimiter\set\{\}
\newcommand{\R}{\mathbb{R}}
\DeclareMathOperator{\conv}{conv}
\DeclareMathOperator{\st}{s.t.}
\newcommand{\CWpiv}[2]{CW_{#1}(#2)}
\newcommand{\CWpiG}[1]{CW_{#1}(G)}
\newcommand{\CWG}{CW(G)}
\newcommand{\Pn}{\Pi_n}
\newcommand{\LOn}{LO(n)}
\newcommand{\LOTwon}{LO^{2}(n)}
\newcommand{\LOTwo}[1]{LO^{2}(#1)}
\newcommand{\STAB}{STAB}
\definecolor{orcidlogocol}{HTML}{A6CE39}
\tikzset{
	orcidlogo/.pic={
		\fill[orcidlogocol] svg{M256,128c0,70.7-57.3,128-128,128C57.3,256,0,198.7,0,128C0,57.3,57.3,0,128,0C198.7,0,256,57.3,256,128z};
		\fill[white] svg{M86.3,186.2H70.9V79.1h15.4v48.4V186.2z}
		svg{M108.9,79.1h41.6c39.6,0,57,28.3,57,53.6c0,27.5-21.5,53.6-56.8,53.6h-41.8V79.1z M124.3,172.4h24.5c34.9,0,42.9-26.5,42.9-39.7c0-21.5-13.7-39.7-43.7-39.7h-23.7V172.4z}
		svg{M88.7,56.8c0,5.5-4.5,10.1-10.1,10.1c-5.6,0-10.1-4.6-10.1-10.1c0-5.6,4.5-10.1,10.1-10.1C84.2,46.7,88.7,51.3,88.7,56.8z};
	}
}
\newbox{\myorcidaffilbox}
\sbox{\myorcidaffilbox}{\large\mbox{\scalerel*{
			\begin{tikzpicture}[yscale=-1,transform shape]
			\pic{orcidlogo};
			\end{tikzpicture}
		}{|}}}
\newcommand\orcidicon[1]{\href{https://orcid.org/#1}{\usebox{\myorcidaffilbox}}}
\providecommand{\keywords}[1]
{
	\small	
	\textbf{Keywords:} #1
}
\newcommand{\revision}[1]{{\color{black}#1}}
\title{Strong SDP based bounds on the cutwidth of a graph%
	\thanks{This research was supported by the Austrian Science
          Fund (FWF): DOC~78 and by the Johannes Kepler University
          Linz,
	Linz Institute of Technology (LIT): LIT-2021-10-YOU-216.}
}
\author[1]{Elisabeth Gaar \orcidicon{0000-0002-1643-6066}
}
\author[2]{Diane Puges \orcidicon{0000-0002-3864-0762}
}
\author[3]{Angelika Wiegele \orcidicon{0000-0003-1670-7951}
	}
\affil[1]{Johannes Kepler 
	University Linz, Altenberger Stra{\ss}e~69, \newline
	4040 Linz, Austria,
	%\newline 
	\href{mailto:elisabeth.gaar@jku.at}{elisabeth.gaar@jku.at}}
\affil[2]{Alpen-Adria-Universit\"at Klagenfurt,
	Universit\"atsstra{\ss}e~65--67, 9020 Klagenfurt, Austria, %\newline 
	\href{mailto:diane.puges@aau.at}{diane.puges@aau.at}}
\affil[3]{Alpen-Adria-Universit\"at Klagenfurt,
        Universit\"atsstra{\ss}e~65--67, 9020 Klagenfurt, Austria \& %\newline
        Universit\"at zu K\"oln,
        Albertus-Magnus-Platz, 50923 K\"oln, Germany, 
	\href{mailto:angelika.wiegele@aau.at}{angelika.wiegele@aau.at}}
\date{}
\begin{document}

\maketitle

\begin{abstract}
Given a linear ordering of the vertices of a graph, the cutwidth 
of a vertex~$v$ with respect to this ordering is the number of edges from any 
vertex before~$v$ (including $v$) to any vertex after~$v$ in this ordering. 
The cutwidth of an ordering is the maximum cutwidth of any vertex with respect 
to this ordering.
We are interested in finding the cutwidth of a graph, that is, 
the minimum cutwidth over all orderings, 
which is an NP-hard problem. 
In order to approximate the cutwidth of a given
graph, we present a semidefinite relaxation. 
We identify several classes of valid inequalities and equalities that we
use to strengthen the semidefinite relaxation. These classes are
on the one hand the well-known 3-dicycle equations and the triangle
inequalities and on the other hand we obtain inequalities from the
squared linear ordering polytope and via lifting the linear
ordering polytope.
The solution of the semidefinite program serves to obtain a lower
bound and also to construct a feasible solution and thereby having an
upper bound on the cutwidth. 

In order to evaluate the quality of our bounds, we perform numerical
experiments on graphs of different sizes and densities. It turns out
that we produce high quality bounds for graphs of medium size
independent of their density in reasonable time. Compared to that, 
obtaining bounds for dense instances of the same quality is out of
reach for solvers using integer linear programming techniques.

\keywords{Cutwidth, linear ordering, semidefinite programming, 
combinatorial optimization}
\end{abstract}

\section{Introduction}
\label{sec:intro}

Several graph parameters are determined by finding an 
arrangement of the vertices of a graph on a straight line 
having a certain objective in mind. 
Depending on the objective, these parameters are, for instance, the
treewidth, the pathwidth, the bandwidth or the cutwidth of a graph.
Computing these graph parameters is necessary for several
applications, e.g., when a 
certain layout has to be found (VLSI design\revision{, see} 
\cite{raspaud1995debruijn}), 
in automatic graph
drawing\revision{, see} \cite{diaz2002survey}, or in many versions of network 
problems arising in energy or
logistics. 
All these applications ask for algorithms to practically compute these
parameters. However, this leads to NP-hard
optimization problems and therefore algorithms for approximating these
parameters in terms of lower and upper bounds are required.
The parameter that is of our interest in this work is the cutwidth of a graph.

\paragraph{Definitions.}
%\label{sec:definitions}

The minimum cutwidth problem (MCP) can be defined 
as follows.
We consider an undirected graph $G=(V,E)$ with vertex set $V$ and 
edge set $E$ and assume without loss of generality that the set 
of vertices $V$ is $V = \set{1, \dots, n}$. 
Furthermore, the set of all permutations of $ \set{1, \dots, n}$ is denoted by 
$\Pn$. 
In 
any permutation $\pi \in \Pn$ of the vertices of $G$, the position of a vertex 
$v \in V$ in $\pi$ 
is given 
by $\pi(v)$. 
Note that any linear ordering of the vertices $V$ of $G$ can be represented by 
a permutation $\pi \in \Pn$. 

The cutwidth $\CWpiv{\pi}{v}$ of a vertex $v$ with respect to the permutation $\pi$ is the number 
of edges 
$\set{u,w} \in E$ such that $\pi(u) \leq \pi(v) < \pi(w)$ holds, i.e., the 
cutwidth 
of $v$ in $\pi$ 
is the number of edges from any vertex before $v$ (including $v$) to any vertex 
after $v$ (not including $v$) in a linear ordering of the vertices according to 
the permutation $\pi$.
Furthermore, the cutwidth $\CWpiG{\pi}$ of a graph $G$ with respect to $\pi$ is 
the 
maximum cutwidth of any vertex with respect to the permutation $\pi$, so 
\begin{align*}
\CWpiG{\pi} = \max_{v \in V} \CWpiv{\pi}{v}
\end{align*} 
holds. Finally, the cutwidth $\CWG$ of a graph $G$ is the minimum 
cutwidth of $G$ with respect to $\pi$ over all possible permutations $\pi$, 
i.e., 
\begin{align*}
\CWG = \min_{\pi \in \Pn} \CWpiG{\pi}. 
\end{align*}

An obvious lower bound on the cutwidth is given by $$\CWG \geq \floor{\frac{\Delta(G)+1}{2}},$$ 
where $\Delta(G)$ is the maximum degree of any vertex in $V$.
Indeed, let us denote by $v_{\max}$ a vertex of G with degree
$\Delta(G)$. Then, for every linear ordering $\pi$, every vertex in the
neighborhood of $v_{\max}$ is counted either in $\CWpiv{\pi}{
  v_{\max}}$ or in  $\CWpiv{\pi}{ u_{\max}}$, where $u_{\max}$ is the
vertex directly preceding $v_{\max}$ in $\pi$. It follows that either
$\CWpiv{\pi}{ v_{\max}}$ or $\CWpiv{\pi}{ u_{\max}}$ has to be
greater or equal to ${\frac{\Delta(G)}{2}}$, and due to the integrality
of $\CWG$ we obtain the lower bound $\floor{\frac{\Delta(G)+1}{2}}$. 

Among connected graphs with $n$ vertices, the graphs with the smallest
cutwidth are the paths, which have a cutwidth of~1. The graphs with the
largest cutwidth are the complete graphs $K_n$, with $CW(K_n) =
\floor{\frac{n²}{4}}$.

\paragraph{Related literature.}
The MCP has been investigated in several aspects \revision{and in
  several contexts. 
Inspired from results in the topology of manifolds,
  \cite{kloeckner2009} explores lower bounds depending on the sparsity
  and the degeneracy of the underlying graph.}
Next to theoretical properties of the cutwidth,  
connections between the cutwidth of a graph $G$ with its treewidth
$TW(G)$ and its pathwidth $PW(G)$ have been explored by several
authors. It is known that $\CWG \leq \Delta(G) PW(G)$ \revision{from}
\cite{chung1989graphs}, $\CWG \geq TW(G)$ \revision{from} 
\cite{bodlaender1986classes}, and
$\CWG = \mathcal{O}((\log n)\Delta(G) TW(G))$ \revision{from} 
\cite{korach1993tree}.
It follows that if $TW(G)$ and $ \Delta(G)$ are bounded by constants, 
$\CWG = \mathcal{O}(\log n)$. Furthermore, if $(X, T)$ is a tree decomposition
of $G$ with treewidth $k$, then $\CWG \leq (k+1)\Delta(G) CW(T)$. 

As for computing the cutwidth, there exist polynomial time algorithms for
certain graph classes, see
e.g.~\cite{heggernes2011splitgraphs,heggernes2012cutwidth,yannakakis1985polynomial}.
%Giannopoulou et al.~
\cite{giannopoulou2019cutwidth} design a
fixed-parameter algorithm for computing the cutwidth that runs in time
$2\mathcal{O}(k^2\log{} k)n$, where $k$ is the cutwidth.
\revision{In a more general setting, \cite{bodlaender2012vertexorder}
  discuss exponential time algorithms for vertex ordering problems,
  including the MCP. A relation of computing the cutwidth and the
  pathwidth via the so-called locality number, a structural parameter
  for strings, has been investigated by~\cite{casel2019connections}.}

Another way to solve the MCP is to model the optimization
problem as a mixed-integer linear
program (MILP). This has been considered by 
%Luttamaguzi, Pelsmajer,
%Shen and Yang~
\cite{luttamaguzi2005} and by 
%L{\'o}pez-Loc{\'e}s et al.~
\cite{LopezLoces2014}. Moreover, in the PhD
thesis of 
%D.~Coudert~
\cite{coudert2016} MILP formulations for linear
ordering problems, among them the cutwidth, pathwidth and bandwidth
problem, are given. Therein, different formulations for these problems
are introduced and compared to each other. 
However, all these algorithms can only deal with sparse instances of
small size. Indeed, \revision{by} \cite{coudert2016} results are given for 
(very) sparse
graphs only with  $|V| \in \{16, \dots, 24\}$.

\revision{\cite{Marti2013bb} introduce a branch-and-bound algorithm
  using lower bounds on the cutwidth of partial solutions and a
  greedy randomized adaptive search procedure (GRASP) to compute upper
  bounds. Applying the metaheuristic adaptive large neighborhood
  search for obtaining orderings with a small cutwidth has been
  introduced by \cite{santos2021}.

} 

A further line of research is to apply semidefinite programming (SDP) to
optimization problems that deal with orderings of the vertices, \revision{see}
e.g.~\cite{buchheim2010, hungerlaender2013ordering}.
In particular, SDP based methods proved to be very successful when
applied to the \revision{single row} facility layout problem, \revision{see} \cite{fischer2019layout, 
schwiddessen22}, \revision{which falls into this category as its goal is to 
order facilities on a straight line in the best way according to some objective function}.
SDP based bounds for the bandwidth problem are introduced 
in~\cite{rendl2021lower}. 
To the best of our knowledge there have been no attempts so far in using
semidefinite programming to tackle the MCP.

\paragraph{Contribution and outline.}
In this paper we present a novel relaxation of the MCP that uses semidefinite
programming. We introduce several valid inequalities that we include
in the SDP in a cutting-plane fashion to strengthen the lower bound on the 
cutwidth.
Moreover, we introduce a heuristic that uses the optimizer of
the SDP to obtain feasible solutions and thereby providing an upper
bound on the cutwidth.
Our computational experiments confirm that we can obtain tight
bounds in reasonable time and that the run time of our algorithms are not
sensitive concerning the density of the graph.

The rest of this paper is structured as follows.
In Section~\ref{sec:sdpsection} we introduce a basic SDP relaxation for the
MCP and provide several linear constraints that can be
used to strengthen the basic SDP relaxation.
We describe in detail the algorithm for computing the lower bounds
arising from the SDP as well as \revision{a} heuristic to obtain upper bounds in Section~\ref{sec:algo}.
Computational results of our new algorithms are given in
Section~\ref{sec:computations}, and Section~\ref{sec:conclusion} concludes.

\section{New bounds for the minimum cutwidth problem}\label{sec:sdpsection}
The aim of this section is to introduce a new SDP relaxation for the MCP and to 
present ways to strengthen this relaxation.

\subsection{Our new basic SDP relaxation}
\label{sec:sdp_formulation}
We follow the approach of 
%Buchheim, Wiegele and Zheng~
\cite{buchheim2010} for 
the quadratic linear ordering problem in order to derive an SDP formulation of 
the MCP. This is a promising endeavor, as the feasible region of both the 
quadratic linear ordering problem and the MCP consist of permutations.

Towards this end, let $A=(a_{ij})_{1 \leq i, j \leq n}$ be the adjacency matrix 
of the graph $G$, 
i.e.,
$a_{ij}=1$ if and only if the edge $\set{i,j}$ is in the set of edges $E$ of 
the graph $G$.  
To  represent a permutation $\pi$, 
%the authors of~
\cite{buchheim2010} introduce a binary variable 
$\chi^\pi_{ij}$ for any $ 1 \leq i < j \leq n$ that indicates whether $\pi(i) < 
\pi(j)$ holds, so in total they have a vector $\chi^\pi$ consisting of  
$\binom{n}{2}$ binary variables. 
They define the linear ordering polytope $\LOn$ as the convex hull of all 
vectors $\chi^\pi$, formally
\begin{align*}
\LOn = \conv \set{ \chi^\pi \colon \pi \in \Pn }, 
\end{align*}
which is a subset of $\R^{\binom{n}{2}}$.
As a consequence, the elements of the set $\LOn \cap \set{0,1}^{\binom{n}{2}}$ 
are exactly 
all 
vectors representing permutations of $\set{1, \dots, n}$.

Let $x = (x_{ij})_{1 \leq i < j \leq n}$ be a vector in $\LOn \cap 
\set{0,1}^{\binom{n}{2}}$ 
representing a 
permutation $\pi \in \Pn$. 
By definition, the binary variable $x_{ij}$ is equal to $1$ if and only if $i$  
is before $j$ in $\pi$.
Then, it can be checked that for any vertex $v$ of $G$,
%\begin{align}
%\label{eq:cutwtithdAsSum}\\
%\CWpiv{\pi}{v} =
%\sum_{\substack{u \in V\\u \neq v}}
%\sum_{\substack{w \in V\\w \neq v}} 
% a_{uv}a_{vw}x_{uv}x_{vw}
%\end{align}
\begin{align}
\label{eq:cutwtithdAsSum}
\CWpiv{\pi}{v}  = &
\sum_{\substack{u \in V\\u < v}}
\sum_{\substack{w \in V\\w > v}} 
 a_{uw}x_{uv}x_{vw} + 
 \sum_{\substack{u \in V\\u > v}}
 \sum_{\substack{w \in V\\w > v\\w \neq u}} 
 a_{uw}(1-x_{vu})x_{vw}  + 
 \sum_{\substack{u \in V\\u < v}}
 \sum_{\substack{w \in V\\w < v\\w \neq u}} 
 a_{uw}x_{uv}(1-x_{wv}) \nonumber \\ 
 & + 
 \sum_{\substack{u \in V\\u > v}}
 \sum_{\substack{w \in V\\w < v}} 
 a_{uw}(1-x_{vu})(1-x_{wv}) 
  + 
 \sum_{\substack{w \in V\\w > v}}
 a_{vw}x_{vw} + 
  \sum_{\substack{w \in V\\w < v}}
 a_{vw}(1-x_{wv})
\end{align}
holds.
The first four terms of this expression count the number of edges from 
any 
 vertex before $v$ to any vertex after $v$ in the permutation, both not 
 including $v$. These four terms are necessary, 
 %to represent the 
 %four different possible arrangements of $u$ and $w$ being before and after 
 %$v$ 
 %in the order of the vertices within $V$, 
 as only one of the variables 
 $x_{uv}$ (if $u<v$) and $x_{vu}$ (if $u>v$), and one of the variables $x_{wv}$ 
 (if $w<v$) and $x_{vw}$ (if $w>v$) exist.
The last two terms of~\eqref{eq:cutwtithdAsSum} count the edges from $v$ to 
any 
vertex after $v$. 

By expanding~\eqref{eq:cutwtithdAsSum}, grouping the constant, linear and 
quadratic terms together, and by combining three times two sums as one, we can 
rewrite~\eqref{eq:cutwtithdAsSum} as
\begin{align}
\label{eq:cutwtithdAsSumExpanded}
\CWpiv{\pi}{v}  &= 
\sum_{\substack{w \in V\\w < v}}
\sum_{\substack{u \in V\\u \geq v}} 
a_{uw} 
+
\sum_{\substack{w \in V\\w > v}}
\sum_{\substack{u \in V\\u \geq v\\{u \neq w}}} 
a_{uw}x_{vw}
+\sum_{\substack{u \in V\\u < v}}
 \sum_{\substack{w \in V\\w < v\\{w \neq u}}} 
 a_{uw}x_{uv}\\
&\phantom{\geq}- \sum_{\substack{u \in V\\u > v}}
\sum_{\substack{w \in V\\w < v}} 
a_{uw}(x_{vu} + x_{wv}) 
- \sum_{\substack{w \in V\\w < v}}
a_{vw}x_{wv} \nonumber
+ 
2\sum_{\substack{u \in V\\u < v}}
\sum_{\substack{w \in V\\w > v}} 
a_{uw}x_{uv}x_{vw} \\
&\phantom{\geq}
-
\sum_{\substack{u \in V\\u > v}}
\sum_{\substack{w \in V\\w > v\\{w \neq u}}} 
a_{uw}x_{vu}x_{vw} 
-
\sum_{\substack{u \in V\\u < v}}
\sum_{\substack{w \in V\\w < v\\{w \neq u}}} 
a_{uw}x_{uv}x_{wv},\nonumber
\end{align}
and as a result, the MCP can be written as
\begin{subequations}
\begin{alignat}{3}
%\mytag{CW}} \qquad
& \min &~\alpha   \\ 
& \st~ &  \alpha &\geq  
\sum_{\substack{w \in V\\w < v}}
\sum_{\substack{u \in V\\u \geq v}} 
a_{uw} 
+
\sum_{\substack{w \in V\\w > v}}
\sum_{\substack{u \in V\\u \geq v\\{u \neq w}}} 
a_{uw}x_{vw}
+\sum_{\substack{u \in V\\u < v}}
\sum_{\substack{w \in V\\w < v\\{w \neq u}}} 
a_{uw}x_{uv}\\
&&&\phantom{\geq}- \sum_{\substack{u \in V\\u > v}}
\sum_{\substack{w \in V\\w < v}} 
a_{uw}(x_{vu} + x_{wv}) 
- \sum_{\substack{w \in V\\w < v}}
a_{vw}x_{wv} \nonumber
+ 
2\sum_{\substack{u \in V\\u < v}}
\sum_{\substack{w \in V\\w > v}} 
a_{uw}x_{uv}x_{vw} \quad \\
&&&\phantom{\geq}
-
\sum_{\substack{u \in V\\u > v}}
\sum_{\substack{w \in V\\w > v\\{w \neq u}}} 
a_{uw}x_{vu}x_{vw} 
-
\sum_{\substack{u \in V\\u < v}}
\sum_{\substack{w \in V\\w < v\\{w \neq u}}} 
a_{uw}x_{uv}x_{wv}
&& \forall v \in V \nonumber \\       
&& x &\in  \LOn \cap 
\set{0,1}^{\binom{n}{2}}.
\end{alignat}
\end{subequations}
This is an integer program with a linear objective function and quadratic 
constraints, which is perfectly suited for deriving an SDP 
relaxation. To do so, we introduce the matrix variable $X = 
(X_{ij,k\ell})_{\substack{1 \leq i < j \leq n \\1 \leq k < \ell \leq n}}$. In 
particular, $X_{ij,k\ell}$ represents the product $x_{ij}x_{k\ell}$. Then the 
following SDP is a relaxation of the MCP.
\begin{subequations}
	\label{eq:sdpCutwidth}\label{SDP_init}
	\begin{alignat}{3}
	& \min & \alpha  \\ 
& \st~ &  \alpha & \geq  \label{eq:cutwidth}
\sum_{\substack{w \in V\\w < v}}
\sum_{\substack{u \in V\\u \geq v}} 
a_{uw} 
+
\sum_{\substack{w \in V\\w > v}}
\sum_{\substack{u \in V\\u \geq v\\{u \neq w}}} 
a_{uw}x_{vw}
+\sum_{\substack{u \in V\\u < v}}
\sum_{\substack{w \in V\\w < v\\{w \neq u}}} 
a_{uw}x_{uv}\\
&&& \phantom{\geq} - \sum_{\substack{u \in V\\u > v}}
\sum_{\substack{w \in V\\w < v}} 
a_{uw}(x_{vu} + x_{wv}) 
- \sum_{\substack{w \in V\\w < v}}
a_{vw}x_{wv} \nonumber
+ 
2\sum_{\substack{u \in V\\u < v}}
\sum_{\substack{w \in V\\w > v}} 
a_{uw}X_{uv,vw} \quad \\
&&& \phantom{\geq}
-
\sum_{\substack{u \in V\\u > v}}
\sum_{\substack{w \in V\\w > v\\{w \neq u}}} 
a_{uw}X_{vu,vw} 
-
\sum_{\substack{u \in V\\u < v}}
\sum_{\substack{w \in V\\w < v\\{w \neq u}}} 
a_{uw}X_{uv,wv}
\qquad && \forall v \in V \nonumber \\       	
	&& x &=  \diag(X) \label{eq:diag}\\
	&& \bar{X} &= \begin{pmatrix} 1 & x^\top \\ x & X\end{pmatrix} 
	\label{eq:psd1}  \\
	&& \bar{X} &\succeq 0.&& \label{eq:psd2} 
	\end{alignat}
\end{subequations}

Note that $x = (x_{ij})_{1 \leq i < j \leq n}$ is a vector of dimension 
$\binom{n}{2}$ and $\bar{X}$ is a square matrix with $(\binom{n}{2}+1)$ columns 
and rows. 
Thus, the SDP~\eqref{eq:sdpCutwidth} has a matrix variable of dimension  
$(\binom{n}{2}+1)$
%, an additional vector variable of dimension
%$\binom{n}{2}$
%\todoangelika{hm. not sure. the vector variable is already part of the
%  $\binom{n}{2} + 1$ rows/columns of the matrix, isn't it?}
and $n + \binom{n}{2} + 1$ constraints.
The $n$ constraints~\eqref{eq:cutwidth} make sure that $\alpha$ is at least the 
cutwidth of each vertex. 
%Constraints~\eqref{eq:3cycle} are the 3-dicycle 
%equations mentioned earlier.
The $\binom{n}{2}+1$ constraints~\eqref{eq:diag} together with the constraints~\eqref{eq:psd1} 
and~\eqref{eq:psd2} represent the  
relaxation 
of $X - xx^\top = 0$
to $X - xx^\top \succeq 0$ and taking the Schur complement. 
Due to the fact 
that $x$ is binary, $X - xx^\top = 0$ also implies that~\eqref{eq:diag} has to 
hold.

\subsection{Strengthening the SDP relaxation}
Next, we investigate several options to improve the SDP 
relaxation~\eqref{eq:sdpCutwidth} of the MCP.
 
\subsubsection{3-dicycle equations} 
In the basic SDP relaxation~\eqref{eq:sdpCutwidth}, no specific information  
about the fact that $x$ should represent a permutation is used. One possible 
way to include such information is to model the transitivity by so-called 
3-dicycle equations, as it is done by 
%Buchheim, Wiegele and Zheng~
\cite{buchheim2010}.
These 3-dicycle equations make 
 sure that if $i$ is before $j$ and
 $j$ is before $k$, then $i$ must be before $k$. For a vector  $x = (x_{ij})_{1 
 \leq i < j \leq n}$, they can be written as
 \begin{align}
 x_{ik}-x_{ij}x_{ik}-x_{ik}x_{jk}+x_{ij}x_{jk}=0 & \quad \forall i < j <
 k \label{eq:3dicycle},
 \end{align}
 so we can include
  \begin{align}
 x_{ik}-X_{ij,ik}-X_{ik,jk}+X_{ij,jk}=0 & \quad \forall  i < j <
 k \label{eq:3dicycleSDP}
 \end{align}
 as additional constraints into our SDP relaxation~\eqref{eq:sdpCutwidth} of 
 the MCP. If we add all possible constraints of the 
 form~\eqref{eq:3dicycleSDP}, we include $\binom{n}{3}$ 
 equality constraints. Note that adding also the non-convex constraint $\rank(\bar{X}) = 1$ 
 to this SDP relaxation, the 
 optimal objective function value becomes $\CWG$.
 This is also the case if we  
 add integer conditions for~$X$.
 Observe, that %the authors of~
 \cite{buchheim2010} show that if the variables 
 are 
 binary, the expression on the left hand-side of~\eqref{eq:3dicycle} is always 
 non-negative. So in this case, one needs to consider only the inequality $\le 0$. 
 However, this does not hold 
 anymore for our SDP relaxation, as the variables are not necessarily binary.

\subsubsection{Triangle inequalities}
Another way to strengthen the SDP relaxation~\eqref{eq:sdpCutwidth} is  
adding the triangle inequalities
\begin{subequations}
	\label{eq:triangle}
	\begin{align}
	0 &\leq X_{ij,k\ell} && \forall i<j, k<\ell \label{eq:triangle1}\\
		X_{ij,k\ell} &\leq X_{ij,ij} && \forall i<j, k<\ell 
		\label{eq:triangle2} \\	
%		X_{ij,k\ell} &\leq X_{k\ell,k\ell} &&  i<j, k<\ell \\	
		X_{ij,ij} + X_{k\ell,k\ell} &\leq 1 + X_{ij,k\ell} &&\forall i<j, 
		k<\ell 
		\label{eq:triangle3} \\	
		X_{ij,k\ell} + X_{uv,k\ell} &\leq X_{k\ell,k\ell} + X_{ij,uv} && 
		\forall i<j, 
		k<\ell, u<v \label{eq:triangle4}\\	
%		X_{ij,k\ell} + X_{uv,ij} &\leq X_{ij,ij} + X_{k\ell,uv} && i<j, k<\ell, 
%u<v \\	
		 X_{ij,ij} + 
		 X_{k\ell,k\ell} + X_{uv,uv}
		 &\leq 
		 1 + X_{ij,k\ell} + X_{ij,uv} + X_{k\ell,uv}
		   &&\forall  i<j, k<\ell, u<v.
		\label{eq:triangle5}
	\end{align}
\end{subequations}
The inequalities~\eqref{eq:triangle1} and~\eqref{eq:triangle2} were 
introduced by %Lov\'asz and Schrijver~
\cite{LovaszSchrijverHierarchy} and 
the constraints~\eqref{eq:triangle3},~\eqref{eq:triangle4} and 
\eqref{eq:triangle5}
originate from %Gruber and Rendl~
\cite{GruberRendl}.
As a consequence,~\eqref{eq:triangle1},~\eqref{eq:triangle2} 
and~\eqref{eq:triangle3} each yield $\binom{n}{2}^2$ potential new constraints. 
Both~\eqref{eq:triangle4} and~\eqref{eq:triangle5} offer the option of 
including $\binom{n}{2}^3$ inequalities.

It can be checked easily that all the inequalities of~\eqref{eq:triangle} are 
satisfied for each matrix $X = xx^\top$ for any $x \in  \LOn \cap 
\set{0,1}^{\binom{n}{2}}$, so for any vector $x$ that represents a permutation. 
Thus, the inequalities~\eqref{eq:triangle} are valid inequalities for the SDP 
relaxation~\eqref{eq:sdpCutwidth} for the MCP. 
Next, we give an alternative reasoning that the 
inequalities~\eqref{eq:triangle} are 
satisfied.

\subsubsection{Inequalities obtained from the squared linear ordering polytope}
%Adams, Anjos, Rendl and	Wiegele~
\cite{AARW} 
introduced so-called exact subgraph 
constraints (ESC), which were later 
computationally exploited for 
the stable set problem, the Max-Cut problem and the coloring problem by 
%Gaar and Rendl~
\cite{GaarRendlIPCO,GaarRendlFull}. 

For the stable set problem the ESCs are defined in the following way.
Let $G = (V,E)$ be a graph with vertex set $V = \{1, 2, \dots, n\}$. The 
squared stable set polytope 
$\STAB^{2}(G)$ of $G$ is defined as
\begin{align*}
\STAB^{2}(G) = \conv\left\{ss^{T}: s \in \{0,1\}^{n}, 
s_{i}s_{j} = 0 \quad \forall 
\{i,j\} \in E \right\},
\end{align*}
and the ESCs for the stable set problem (ESCSS) are 
\begin{align*}
X_I \in \STAB^2(G_I), 
\end{align*} 
where $I \subseteq 
V$, $G_I$ is the induced subgraph of $G$ on the vertices $I$, $X$ is the matrix 
variable of an SDP relaxation of the stable set problem, and $X_I$ is the 
submatrix of $X$ that corresponds to $G_I$.
As detailed \revision{by}~\cite{GaarSiebenhoferWiegeleStabBaB}, 
the ESCSS are equivalent 
to 
\begin{align*}
X_I \in \STAB^2(G^{0}_{|I|}), 
\end{align*} 
where the graph $G^{0}_k = (V^{0}_k,E^{0}_k)$ is given as  
$V^{0}_k = \{1, 2, \dots, k\}$ and $E^{0}_k = \emptyset$.
This implies that only the squared stable set polytope for a graph with $k$ 
vertices and no edges needs to be considered.

Furthermore, %the authors of~
\cite{GaarSiebenhoferWiegeleStabBaB} describe that 
the ESCSS can be 
represented by 
inequalities for $X_I$, and therefore as inequalities for $X$. 
In~\cite{GaarVersionsESH}, these inequalities that represent 
the ESCSSs
are stated explicitly for subgraphs with $2$ and $3$ vertices. 
In fact, the triangle inequalities~\eqref{eq:triangle} are exactly the 
inequalities representing the ESCSS for subgraphs 
of order 
$2$  
(\eqref{eq:triangle1}, \eqref{eq:triangle2} and \eqref{eq:triangle3}) and for 
subgraphs of order $3$ 
(\eqref{eq:triangle4} and \eqref{eq:triangle5}). 
When 
deriving the inequalities that represent the ESCSS for $G^{0}_k$, 
any binary vector of dimension $k$ is feasible for the corresponding stable set 
problem in $G^{0}_k$. As a 
consequence, $X = xx^T$ is in $\STAB^2(G^{0}_{k})$ for any $x\in \{0,1\}^k$.

For the MCP, only specific (and not all) binary vectors of dimension 
$\binom{n}{2}$ induce a 
permutation of the $n$ vertices. Thus, it makes sense to deduce the ESCs
specifically for the linear ordering problem as they might be more restrictive.
Towards this end, we introduce the quadratic linear ordering polytope 
\begin{align*}
\LOTwon = \conv \set{ \chi^\pi {\chi^\pi}^\top \colon \pi \in \Pn }, 
\end{align*}
which is a subset of $\R^{\binom{n}{2} \times \binom{n}{2}}$. 
%Clearly $x \in \LOn$ holds if and only if $X = xx^\top \in \LOTwon$ for every 
%%%$x$. 
From our considerations above, it follows that
\begin{align*}
\LOTwon \subseteq \STAB^2(G^{0}_{\binom{n}{2}})
\end{align*}
holds for all $n \in \mathbb{N}$.

The vertices of the polytope $\LOTwon$ can easily be enumerated.
With the help of the software PORTA \revision{by}~\cite{PORTA} it is possible 
to 
determine the 
inequalities that represent $\LOTwon$ for small values of $n$. In 
particular, 
any matrix $X$ 
is 
in $\LOTwon$ for $n=3$ if and only if it is symmetric and all facet defining 
inequalities and equations 
%\begin{subequations}
%	\label{eq:LO2_n3}
%	\begin{align}	
%%(  1)             X_{ik,jk} == X_{jk,ik}              [Symmetry I  ]
%%(  2)             X_{ij,ik} == X_{ik,ij}              [Symmetry II ]   
%%(  3)             X_{jk,ij} == X_{ij,jk}              [Symmetry III]
%                   0 &\leq X_{ij,k\ell} && \forall i<j, k<\ell 
%                   \label{eq:LO2_n3_1}\\
%            X_{ij,k\ell}&\leq X_{ij,ij} && \forall i<j, k<\ell 
%            \label{eq:LO2_n3_2}\\
%X_{ij,ij} + X_{k\ell,k\ell}&\leq 1 + X_{ij,k\ell}&& \forall i<j, k<\ell 
%\label{eq:LO2_n3_3} \\	
%X_{ik,ik} -X_{ij,ik} - X_{ik,jk} + X_{ij,jk} &= 0  && \forall i<j < k 
%\label{eq:LO2_n3_0}
%	\end{align}
%\end{subequations}
\begin{subequations}
	\label{eq:LO2_n3}
	\begin{align}	
	%(  1)             X_{ik,jk} == X_{jk,ik}              [Symmetry I  ]
	%(  2)             X_{ij,ik} == X_{ik,ij}              [Symmetry II ]   
	%(  3)             X_{jk,ij} == X_{ij,jk}              [Symmetry III]
	0 &\leq X_{ij,jk} && \forall i<j<k 
	\label{eq:LO2_n3_1}\\
	X_{ij,ik}\leq X_{ij,ij}, \quad
	X_{ij,ik}\leq X_{ik,ik}, \quad
	X_{ik,jk}&\leq X_{ik,ik}, \quad
	X_{ik,jk}\leq X_{jk,jk}
	&& \forall i<j<k 
	\label{eq:LO2_n3_2}\\
	X_{ij,ij} + X_{jk,jk}&\leq 1 + X_{ij,jk}&& \forall i<j<k 
	\label{eq:LO2_n3_3} \\	
	X_{ik,ik} -X_{ij,ik} - X_{ik,jk} + X_{ij,jk} &= 0  && \forall i<j < k 
	\label{eq:LO2_n3_0}
	\end{align}
\end{subequations}
are satisfied. Note, that~\eqref{eq:LO2_n3_1},~\eqref{eq:LO2_n3_3} 
and~\eqref{eq:LO2_n3_0} each yield $\binom{n}{3}$ potential inequalities 
and~\eqref{eq:LO2_n3_2}
offers the option of 
including $4\binom{n}{3}$ constraints.
It is easy to see that 
 \eqref{eq:LO2_n3_0} coincides with the 3-dicycle equations already considered 
as~\eqref{eq:3dicycleSDP}. 
Furthermore,
\eqref{eq:LO2_n3_1}, \eqref{eq:LO2_n3_2} and 
\eqref{eq:LO2_n3_3} are a subset of 
the triangle inequalities  
\eqref{eq:triangle1},  \eqref{eq:triangle2} and  \eqref{eq:triangle3}, 
respectively. 
It turns out that the following holds.
\begin{lemma}
	\label{lem:ESCstronger}
	If a symmetric $X$ satisfies~\eqref{eq:LO2_n3}, then $X$ also
	fulfills all inequalities of~\eqref{eq:triangle} whenever 
	$|\{i,j,k,\ell,u,v\}| \leq 3$ holds.
\end{lemma}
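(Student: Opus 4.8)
The plan is to reduce each instance of~\eqref{eq:triangle} with at most three distinct indices to a membership statement in the quadratic linear ordering polytope $\LOTwo{3}$, and then to invoke its known facet description from~\eqref{eq:LO2_n3}. Throughout I may assume $n\ge 3$, the MCP being trivial otherwise.

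First I would fix an inequality of~\eqref{eq:triangle} with $|\{i,j,k,\ell,u,v\}|\le 3$. Such an inequality involves two index pairs (for~\eqref{eq:triangle1}--\eqref{eq:triangle3}) or three of them (for~\eqref{eq:triangle4},~\eqref{eq:triangle5}), and the hypothesis says precisely that all of these pairs are $2$-element subsets of one common set of at most three vertices. Enlarging that set if necessary, I would pick $S=\{a,b,c\}\subseteq V$ with $a<b<c$ such that every index pair occurring in the fixed inequality is one of $\{a,b\}$, $\{a,c\}$, $\{b,c\}$, and then pass to the $3\times 3$ principal submatrix $X_S$ of $X$ whose rows and columns are indexed by these three pairs.

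Next I would apply the hypothesis~\eqref{eq:LO2_n3} to the single triple $(a,b,c)$, i.e.\ with $(i,j,k)=(a,b,c)$ so that $ij,ik,jk$ become $\{a,b\},\{a,c\},\{b,c\}$; this gives that $X_S$ is symmetric and satisfies all of~\eqref{eq:LO2_n3_1}--\eqref{eq:LO2_n3_0}. By the facet description recalled just before the lemma, this is equivalent to $X_S\in\LOTwo{3}$, hence $X_S=\sum_{\pi\in\Pi_3}\lambda_\pi\,\chi^\pi{\chi^\pi}^\top$ for some $\lambda_\pi\ge 0$ with $\sum_{\pi\in\Pi_3}\lambda_\pi=1$. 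Each matrix $\chi^\pi{\chi^\pi}^\top$ is of the form $xx^\top$ with $x\in\LO(3)\cap\{0,1\}^{3}$ and therefore, as already observed in the text, satisfies every inequality of~\eqref{eq:triangle}; in particular it satisfies the one I fixed, which — because all its index pairs lie in $S$ — is a linear inequality in the entries of this $3\times 3$ matrix. Linear inequalities are preserved under convex combinations, so $X_S$, and therefore $X$, satisfies that inequality; since it was an arbitrary member of~\eqref{eq:triangle} with $|\{i,j,k,\ell,u,v\}|\le 3$, the lemma follows.

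I do not expect any genuinely hard step. The only point requiring care is the index bookkeeping: matching the pattern $ij,ik,jk$ of~\eqref{eq:LO2_n3} to the three $2$-subsets of $\{a,b,c\}$, and checking that $|\{i,j,k,\ell,u,v\}|\le 3$ really does force every index pair appearing in a given inequality of~\eqref{eq:triangle} into one common triple (immediate once one lists how the pairs can coincide or overlap). A fully self-contained alternative would avoid PORTA and the convexity argument and instead verify the handful of inequalities of~\eqref{eq:triangle} directly, by a case distinction on the coincidence pattern of the index pairs and using the $3$-dicycle equation~\eqref{eq:LO2_n3_0} to rewrite $X_{ij,ik}$ and $X_{ik,jk}$ in terms of $X_{ij,jk}$ and the diagonal entries; that route is elementary but noticeably more tedious.
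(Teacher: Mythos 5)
Your argument is correct, but it takes a different route from the paper. The paper proves the lemma by direct algebraic manipulation: it verifies~\eqref{eq:triangle2} for three distinct indices by combining the 3-dicycle equation~\eqref{eq:LO2_n3_0} with~\eqref{eq:LO2_n3_2}, and then disposes of the remaining families of~\eqref{eq:triangle} through a list of analogous implications, never using anything beyond the constraints~\eqref{eq:LO2_n3} themselves. You instead argue polyhedrally: after relabelling, symmetry plus~\eqref{eq:LO2_n3} for the single triple containing all occurring indices is exactly the stated outer description of $\LOTwo{3}$, so the corresponding $3\times 3$ principal submatrix lies in $\LOTwo{3}$, is a convex combination of the matrices $\chi^\pi{\chi^\pi}^\top$, $\pi\in\Pi_3$, each of which satisfies~\eqref{eq:triangle} (being $xx^\top$ for binary $x$), and affine inequalities survive convex combination. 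This buys uniformity: one argument covers all five families of~\eqref{eq:triangle}, every coincidence pattern of the pairs, and in particular the $|\{i,j,k,\ell,u,v\}|=2$ instances (such as $X_{ij,ij}\geq 0$ and $X_{ij,ij}\leq 1$), which in the paper are handled only implicitly via the bullet list; your explicit assumption $n\geq 3$ is genuinely needed for these, since for $n=2$ the hypothesis~\eqref{eq:LO2_n3} is vacuous. The price is a logical dependence on the completeness (the ``if'' direction) of the facet description of $\LOTwo{3}$, which the paper asserts on the basis of a PORTA computation but never needs in its own proof; this is legitimate since the characterization is stated in the paper just before the lemma, and it is in any case easy to certify by hand because $\LOTwo{3}$ is the $5$-dimensional simplex spanned by its six affinely independent vertices. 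Your remark that one could instead verify the inequalities directly via~\eqref{eq:LO2_n3_0} is essentially the paper's proof.
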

\begin{proof}
	Assume a symmetric $X$ satisfies~\eqref{eq:LO2_n3}.
	
	We first show that then $X$ fulfills~\eqref{eq:triangle2} whenever 
	$|\{i,j,k,\ell,u,v\}| \leq 3$ holds. 
	Clearly $|\{i,j,k,\ell,u,v\}| \geq 2$ always holds, 
	and~\eqref{eq:triangle2} is trivial if $|\{i,j,k,\ell,u,v\}| = 2$. 
	So let $\{i,j,k,\ell,u,v\} = \{i',j',k'\}$ with $i' < j' < k'$. 
	We have to show that $X$ fulfills
	\begin{subequations}
		\label{eq:LO2_n3_MainDom}
		\begin{align}	
		X_{i'j',i'k'}&\leq X_{i'j',i'j'} \label{eq:LO2_n3_MainDom_1}\\
		X_{i'j',j'k'}&\leq X_{i'j',i'j'} \label{eq:LO2_n3_MainDom_2}\\
		X_{i'k',i'j'}&\leq X_{i'k',i'k'} \label{eq:LO2_n3_MainDom_3}\\
		X_{i'k',j'k'}&\leq X_{i'k',i'k'} \label{eq:LO2_n3_MainDom_4}\\
		X_{j'k',i'j'}&\leq X_{j'k',j'k'} \label{eq:LO2_n3_MainDom_5}\\
		X_{j'k',i'k'}&\leq X_{j'k',j'k'} \label{eq:LO2_n3_MainDom_6}.
		\end{align}
	\end{subequations}
Clearly, the inequalities~\eqref{eq:LO2_n3_MainDom_1}, 
\eqref{eq:LO2_n3_MainDom_3}, \eqref{eq:LO2_n3_MainDom_4} 
and~\eqref{eq:LO2_n3_MainDom_6} are fulfilled because of~\eqref{eq:LO2_n3_2}.

Furthermore, $X_{i'k',i'k'} = X_{i'j',i'k'} + X_{i'k',j'k'} - X_{i'j',j'k'}$ 
because of~\eqref{eq:LO2_n3_0} and $X_{i'k',j'k'}\leq X_{i'k',i'k'}$ due 
to~\eqref{eq:LO2_n3_2}. Thus, 
$X_{i'k',j'k'} \leq X_{i'j',i'k'} + X_{i'k',j'k'} - X_{i'j',j'k'}$ 
holds, which implies that
$X_{i'j',j'k'}\leq X_{i'j',i'k'}$ holds. Together with $X_{i'j',i'k'}\leq 
X_{i'j',i'j'}$ because of~\eqref{eq:LO2_n3_2}, this implies 
$X_{i'j',j'k'}\leq X_{i'j',i'j'}$, and so~\eqref{eq:LO2_n3_MainDom_2} holds. 

Analogously it can be shown that~\eqref{eq:LO2_n3_MainDom_5} holds with the 
help of~\eqref{eq:LO2_n3_0} and~\eqref{eq:LO2_n3_2}. As a result, $X$ it 
fulfills~\eqref{eq:triangle2} whenever 
$|\{i,j,k,\ell,u,v\}| \leq 3$ holds.
In a similar fashion, 
\begin{itemize}
	\setlength\itemsep{0em}
	%\item \eqref{eq:LO2_n3_0} and~\eqref{eq:LO2_n3_2}
	%imply~\eqref{eq:triangle2},
	\item \eqref{eq:LO2_n3_0},~\eqref{eq:LO2_n3_1} and~\eqref{eq:triangle2} 
	imply~\eqref{eq:triangle1},
	\item \eqref{eq:LO2_n3_0},~\eqref{eq:LO2_n3_2} and~\eqref{eq:LO2_n3_3}
	imply~\eqref{eq:triangle3},
	\item \eqref{eq:LO2_n3_0} and~\eqref{eq:triangle2} 
	imply~\eqref{eq:triangle4}, and 
	\item \eqref{eq:LO2_n3_0},~\eqref{eq:triangle1} and~\eqref{eq:triangle3}
	imply~\eqref{eq:triangle3}
\end{itemize}
whenever $|\{i,j,k,\ell,u,v\}| \leq 3$ holds. As a consequence, all 
inequalities 
of~\eqref{eq:triangle} are satisfied in this case.	  
\end{proof}

Note that Lemma~\ref{lem:ESCstronger} is indeed no surprise: With the intuition 
of the ESCs for the stable set problem, the triangle 
inequalities~\eqref{eq:triangle} (which represent the membership to 
$\STAB(G^{0}_3)$) must be satisfied for any $3 \times 3$ 
submatrix 
of $X$, as the only condition is that the corresponding submatrix must be 
formed by binary vectors.
Instead, the constraints~\eqref{eq:LO2_n3} (which represent the membership to 
$\LOTwo{3}$) are only valid for $3 \times 3$ 
submatrices of $X$ whose indices that correspond to the three pairs $ij$, $ik$, 
$jk$ for any $1 \leq i<j<k \leq n$. Thus, the inequalities~\eqref{eq:LO2_n3} 
capture more structure, but are valid for fewer submatrices.

With the help of $\LOTwo{4}$ we were able to find the next valid inequalities, 
which are (to the best of the knowledge of the authors) not known as valid 
inequalities for the linear ordering problem so far.
\begin{lemma}
	The inequalities
	\begin{align}\label{eq:LOtwofour}
	 x_{i\ell } +X_{ik,jk} +X_{jk,j\ell } &\leq x_{ik} +x_{j\ell 
	} 
	+X_{ij,k\ell } +X_{i\ell ,jk} \qquad \forall i<j<k<\ell
	\end{align}
	are valid inequalities for the SDP 
	relaxation~\eqref{eq:sdpCutwidth} for the MCP.
\end{lemma}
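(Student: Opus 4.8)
The plan is to prove~\eqref{eq:LOtwofour} by checking it on every integer‑feasible point of~\eqref{eq:sdpCutwidth}, that is, on every matrix $X = \chi^\pi{\chi^\pi}^\top$ arising from a permutation $\pi\in\Pn$ (these are exactly the rank‑one points, and a valid inequality is one satisfied by all of them). Since~\eqref{eq:LOtwofour} only involves entries indexed by pairs contained in $\{i,j,k,\ell\}$, this is really a statement about $\LOTwo{4}$: it suffices to take $x_{pq}\in\{0,1\}$ encoding whether $p$ precedes $q$ in the linear order that $\pi$ induces on $\{i,j,k,\ell\}$ and $X_{pq,rs}=x_{pq}x_{rs}$, and verify the resulting polynomial inequality.

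The first step is purely algebraic. Substituting the products, moving all terms to one side and regrouping by common factors, I would rewrite~\eqref{eq:LOtwofour} in the equivalent form
\[
(1-x_{jk})\,(x_{i\ell} - x_{ik} - x_{j\ell}) \;\le\; x_{ij}\,x_{k\ell}.
\]
Finding this factored form is the only nonobvious point, and it immediately settles the case $x_{jk}=1$, where the left‑hand side vanishes and the right‑hand side is nonnegative.

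It then remains to handle $x_{jk}=0$, where the inequality reduces to $x_{i\ell}-x_{ik}-x_{j\ell}\le x_{ij}x_{k\ell}$. As all variables lie in $\{0,1\}$, the left‑hand side is at most $1$, so the inequality can only fail if $x_{i\ell}=1$, $x_{ik}=0$, $x_{j\ell}=0$ and $x_{ij}x_{k\ell}=0$. But $x_{jk}=0$, $x_{ik}=0$, $x_{i\ell}=1$, $x_{j\ell}=0$ mean, for the genuine linear order underlying $\pi$, that $k$ precedes $j$, $k$ precedes $i$, $i$ precedes $\ell$, and $\ell$ precedes $j$; by transitivity these force the order $k\prec i\prec\ell\prec j$, whence $i\prec j$ and $k\prec\ell$, i.e.\ $x_{ij}=x_{k\ell}=1$, contradicting $x_{ij}x_{k\ell}=0$. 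Hence~\eqref{eq:LOtwofour} holds at every permutation point, which is exactly validity for the SDP relaxation~\eqref{eq:sdpCutwidth}.

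I expect no serious obstacle here; the only subtlety — and precisely the reason~\eqref{eq:LOtwofour} is not implied by the triangle inequalities~\eqref{eq:triangle} — is that the final step genuinely uses transitivity of the underlying order, not merely that $x$ is binary, so an arbitrary binary $x$ need not satisfy~\eqref{eq:LOtwofour}. As an alternative one could simply enumerate the $4! = 24$ linear orders of $\{i,j,k,\ell\}$ and check~\eqref{eq:LOtwofour} on each, but the factored form above collapses this to the two short cases described.
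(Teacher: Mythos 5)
Your proof is correct and follows essentially the same route as the paper: restrict to the rank-one permutation points $X=\chi^\pi{\chi^\pi}^\top$, split on the value of $x_{jk}$, dispose of the case $x_{jk}=1$ trivially, and in the case $x_{jk}=0$ use transitivity of the underlying order to force $k\prec i\prec\ell\prec j$ and hence $x_{ij}x_{k\ell}=1$, ruling out a violation. The factored form $(1-x_{jk})(x_{i\ell}-x_{ik}-x_{j\ell})\le x_{ij}x_{k\ell}$ is just a compact way of writing the same case reduction the paper performs directly.
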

\begin{proof}
	In order to prove this lemma, it is enough to show 
	that for all $i<j<k<\ell$ and for all $X = 
	\chi^\pi 
	{\chi^\pi}^\top$ for $\pi \in 
	\Pn$  the inequality
	\begin{align}\label{eq:LOtwofour2}
\chi^\pi_{i\ell } +\chi^\pi_{ik}\chi^\pi_{jk} +\chi^\pi_{jk}\chi^\pi_{j\ell 
} &\leq \chi^\pi_{ik} + \chi^\pi_{j\ell 
} 
+\chi^\pi_{ij}\chi^\pi_{k\ell } +\chi^\pi_{i\ell}\chi^\pi_{jk} 
\end{align}
	is satisfied.
	To do so, we distinguish two cases for fixed  $i<j<k<\ell$ 
	and fixed 
	$\pi$.
	
	If $\pi(j) < \pi(k)$, then $\chi^\pi_{jk} = 1$ and~\eqref{eq:LOtwofour2} 
	simplifies to 
	$0 \leq \chi^\pi_{ij}\chi^\pi_{k\ell }$, which is clearly satisfied.
	
	If $\pi(k) < \pi(j)$, then $\chi^\pi_{jk} = 0$ and 
	therefore~\eqref{eq:LOtwofour2} simplifies to
	\begin{align}\label{eq:LOtwofour3}
\chi^\pi_{i\ell }  &\leq \chi^\pi_{ik} + \chi^\pi_{j\ell } 
+\chi^\pi_{ij}\chi^\pi_{k\ell }.
\end{align}
	As this inequality is surely satisfied if $\chi^\pi_{i\ell } = 0$, we only 
	have to investigate $\chi^\pi_{i\ell } = 1$, so $\pi(i) < \pi(\ell)$. 
	Assume $\chi^\pi_{ik} + \chi^\pi_{j\ell} = 0$, then $\pi(k) < \pi(i)$ and 
	$\pi(\ell) < \pi(j)$. With all other relations we have this implies that 
	$\pi(k) < \pi(i) < \pi(\ell) < \pi(j)$ holds. Thus,
	$\chi^\pi_{ij}\chi^\pi_{k\ell } = 1$ holds under our assumption, which 
	implies that the right 
	hand-side of~\eqref{eq:LOtwofour3} is at least one.  
	Therefore,~\eqref{eq:LOtwofour2} is fulfilled in this case.
	
	As a result, in any case~\eqref{eq:LOtwofour2} is satisfied, which 
	finishes the proof.
\end{proof}

\subsubsection{Liftings of inequalities from the linear ordering polytope}

%Buchheim, Wiegele and Zheng~
\cite{buchheim2010} derived their quadratic 3-dicycle 
equations~\eqref{eq:3dicycle} as an alternative to the linear 
3-dicycle inequalities
\begin{align}
\label{eq:lin3dicycle}
	0 \leq x_{ij} + x_{jk} - x_{ik} \leq 1 \qquad &\forall i<j<k.
\end{align}

One could also use the standard reformulation linearization technique (RLT), 
of which the foundations were laid by %Adams and Sherali~
\cite{adams1986tight}. 
Using this approach, we can multiply the inequalities~\eqref{eq:lin3dicycle} by 
$x_{uv}$ and $(1-x_{uv})$ for every pair $(u,v)$ with 
$u<v$, and then replace products of the form $x_{ij}x_{k\ell}$ by 
$X_{ij,k\ell}$. In this way we obtain the set of valid inequalities
\begin{subequations}
	\label{eq:lin3d}
\begin{alignat}{3}
	0  \leq~ &   X_{ij,uv} + X_{jk,uv} - X_{ik,uv} &&  &&\forall 
	i<j<k, u<v\label{eq:lin3dy_xLHS}\\
		 &   X_{ij,uv} + X_{jk,uv} - X_{ik,uv} &&\leq x_{uv}  
		 &&\forall 
	i<j<k, u<v\label{eq:lin3dy_xRHS}\\
		0  \leq~ &  x_{ij} + x_{jk} - x_{ik} - X_{ij,uv} - X_{jk,uv} + 
		X_{ik,uv} 
		&&  &&\forall i<j<k, u<v\label{eq:lin3dy_1-xLHS}\\
			 &   x_{ij} + x_{jk} - x_{ik} - X_{ij,uv} - 
			X_{jk,uv} + X_{ik,uv} 
		&&\leq 1- x_{uv} \quad &&\forall i<j<k, u<v,\label{eq:lin3dy_1-xRHS}
\end{alignat}
\end{subequations}
which yields $4\binom{n}{3}\binom{n}{2}$ potential additional constraints. 
However, some of them are already implied by the inequalities previously 
introduced. 
Indeed, 
\begin{itemize}
	\setlength\itemsep{0em}
	\item \eqref{eq:lin3dy_xLHS} for 
	$(u,v)=(i,j)$ and $(u,v)=(j,k)$
	is implied by~\eqref{eq:triangle1} and ~\eqref{eq:triangle2},  
	\item \eqref{eq:lin3dy_xLHS} for $(u,v) = (i,k)$ is 
	implied by~\eqref{eq:triangle1} and~\eqref{eq:3dicycleSDP},
		\item \eqref{eq:lin3dy_xRHS} for 
	$(u,v)=(i,j)$ and $(u,v)=(j,k)$
	is implied by~\eqref{eq:triangle2} and ~\eqref{eq:3dicycleSDP},
\item \eqref{eq:lin3dy_xRHS} for $(u,v) = (i,k)$  is 
implied by~\eqref{eq:triangle2},
\item \eqref{eq:lin3dy_1-xLHS} for $(u,v)=(i,j)$ and $(u,v)=(j,k)$ 
is implied by~\eqref{eq:triangle2} and~\eqref{eq:3dicycleSDP},
\item \eqref{eq:lin3dy_1-xLHS} for $(u,v) = (i,k)$ is 
implied by~\eqref{eq:triangle2},
\item \eqref{eq:lin3dy_1-xRHS} for $(u,v)=(i,j)$ and $(u,v)=(j,k)$ 
is implied by~\eqref{eq:triangle2} and~\eqref{eq:triangle3}, and 
\item \eqref{eq:lin3dy_1-xRHS} for $(u,v) = (i,k)$ is 
implied by~\eqref{eq:triangle3} and~\eqref{eq:3dicycleSDP}.
\end{itemize}
Thus, at least one of $u$ and $v$ needs to be different to $i$, $j$ and $k$ 
such that an inequality of~\eqref{eq:lin3d} has the potential to bring new 
information.

\section{Algorithms}\label{sec:algo} 
In this section we describe in detail our algorithm that utilizes 
our new SDP relaxation for the MCP and its strengthenings derived in the last 
section.  
Furthermore, we present a new upper bound obtained by a heuristic 
utilizing the optimizer of the SDP relaxation.

\subsection{Algorithm for computing our new lower bound}\label{sec:algoLB} 

Adding all the previously described inequality and equality constraints at once 
to the 
basic SDP 
relaxation~\eqref{SDP_init} would render it too computationally expensive to 
solve. Therefore, a cutting-plane approach is used to obtain a tight lower 
bound for the MCP.

\subsubsection{Separating constraints}
The total number of possible inequality and equality constraints to add to the 
basic SDP 
relaxation~\eqref{SDP_init} is $\mathcal{O}(n^6)$. While 
it is possible to exhaustively enumerate them and keep the ones with the 
largest violation, it would take a long time and does not guarantee the best 
tightening of the relaxation. A heuristic to find violated constraints is 
thus preferred. 

The algorithm used 
to find a single violated constraint for a given solution $\bar{X}$ of the SDP 
relaxation~\eqref{SDP_init} is a simulated annealing heuristic, in 
which the current solution is represented by a tuple of indices 
$((i,j,k,\ell),(u,v),(q,w))$, with $i<j<k<\ell$, $u<v$ and $q < w$, coupled 
with the constraint 
type. 
The possible constraint types are 
the 3-dicycle equations~\eqref{eq:3dicycleSDP}; 
the triangle inequalities~\eqref{eq:triangle1}, \eqref{eq:triangle2}, 
\eqref{eq:triangle3}, \eqref{eq:triangle4}, \eqref{eq:triangle5}; 
the inequality from the squared linear ordering polytope of order 
four~\eqref{eq:LOtwofour}, 
and the inequalities obtained from lifting the linear ordering 
polytope~\eqref{eq:lin3dy_xLHS}, \eqref{eq:lin3dy_1-xLHS}, 
\eqref{eq:lin3dy_xRHS}, \eqref{eq:lin3dy_1-xRHS} and are given in the array 
\texttt{constraintsToTest}.
Note that the tuple of indices is defined in such a way that the required 
indices of all possible constraints can be extracted from it. For example, the 
three pairs of indices of~\eqref{eq:triangle5} can be extracted as $(i,j)$, 
$(u,v)$ and $(q,w)$ from the current solution.

Whenever a current solution is given, a neighbor solutions are obtained 
with \textit{random\_neighbor\_indices}() 
by randomly replacing one index from the tuple of indices of the current 
solution 
before ordering it 
again. 
The violation of a solution (i.e., a fixed type of constraint for a fixed tuple 
of indices) 
for the solution $\bar{X}$ of the SDP relaxation~\eqref{SDP_init} can be 
computed via \textit{violation}(), and is positive if the inequality or 
equality constraint is violated.
The pseudocode of this simulated annealing heuristic can be found in 
Algorithm~\ref{alg:SA_cuts}.

% The algorithm starts with initializing the current solution with a random set of indices $\set{i,j,k,l,u,v}$, with $i<j<k<l<u<v$, and the current temperature $T$ with a given value $T_{\text{init}}$. A neighbor solution is then generated by randomly replacing one index from the set before ordering it again. For each inequality in the set of potential cuts, we compute the violation and the difference $\Delta$ to the current violation. If $\Delta$ is positive, i.e. if the violation of the neighbor cut is more important than the current one, the neighbor becomes the current solution. If not, it replaces the current solution with a probability of $e^{-\frac{\Delta}{T}}$. If the new solution is better than the best solution found so far (i.e., the violation is bigger), then the inequality is kept as the best solution. The algorithm stops when the solution hasn't increased for a certain number of iterations (parameter \texttt{lenPlateau}), or when the number of iterations reaches a fixed limit (parameter \texttt{maxIter}).

\RestyleAlgo{ruled}

\begin{algorithm}[tbhp]
	\DontPrintSemicolon

	\SetKwInput{KwInput}{Input}                % Set the Input

	\SetKwInput{KwOutput}{Output}              % set the Output
	\SetKwInput{KwParameters}{Parameters}

	\KwParameters{$T_{\text{init}}$, $f_t$,  \revision{\texttt{maxIterSep}}, 
	\texttt{maxLenPlateau}}
	\KwInput{solution $\bar{X}$ of the SDP relaxation~\eqref{SDP_init},  array 
	of types of constraints 
	\texttt{constraintsToTest}}
	
	iter $\longleftarrow$ 0\;
	lenPlateau $\longleftarrow$ 0\;
	$(i,j,k,\ell) \longleftarrow$ 
	\textit{random}($\set{1, \ldots, n}$)  with 
	$i<j<k<\ell$\;
	$(u,v)$ $\longleftarrow$ 
	\textit{random}($\set{1, \ldots, n}$)  with 
	$u<v$\;	
	$(q,w)$ $\longleftarrow$ 
	\textit{random}($\set{1, \ldots, n}$)  with 
	$q<w$\;		
	indices $\longleftarrow ((i,j,k,\ell), (u,v), (q,w))$\;
	currentSolution $\longleftarrow$ (indices, None)\;
	currentViolation $\longleftarrow$ $-\infty$\;
	bestSolution $\longleftarrow$ currentSolution\;
	bestViolation $\longleftarrow$ currentViolation\;
	$T \longleftarrow T_{\text{init}}$\;
	\While {\upshape{lenPlateau} < \upshape{\texttt{maxLenPlateau}} and 
	\upshape{iter} < \upshape{ \revision{\texttt{maxIterSep}}}}
	{
		
		iter $\longleftarrow$ iter + 1\;
		lenPlateau $\longleftarrow$ lenPlateau + 1\;
		neighborIndices $\longleftarrow$ 
		\textit{random\_neighbor\_indices}(currentSolution)\;
		\For{\upshape{inequality} in \upshape{\texttt{constraintsToTest}}}{
			neighborSolution = (neighborIndices, inequality)\;
			neighborViolation  $\longleftarrow$ 
			\textit{violation}(neighborSolution,  
			$\bar{X}$)\;
			$\Delta  \longleftarrow$  neighborViolation $-$ currentViolation 
			\;
			\eIf {$\Delta > 0$}{
				currentSolution $\longleftarrow$ neighborSolution\;
				currentViolation $\longleftarrow$ neighborViolation\;}
			{\If{$\text{random}([0,1])$ < $e^{\frac{\Delta}{T}}$}
				{currentSolution $\longleftarrow$ neighborSolution\;
				currentViolation $\longleftarrow$ neighborViolation\;
				$ T \longleftarrow T \cdot f_t$}
%			{With probability  $e^{-\frac{\Delta}{T}}$:\\ 
		}
	
		\If{\upshape{currentViolation} > \upshape{bestViolation}}
			{bestSolution  $\longleftarrow$ currentSolution\;
				bestViolation $\longleftarrow$ currentViolation\;
				lenPlateau $\longleftarrow$ 0
				}			
	
		}		
}
\Return bestSolution, bestViolation
	\caption{Simulated annealing to separate constraints}\label{alg:SA_cuts}
\end{algorithm}

\subsubsection{Outline of the overall algorithm}
After detailing how to find violated inequality and equality constraints with 
simulated annealing in 
Algorithm~\ref{alg:SA_cuts}, we are now able to describe our main algorithm 
to 
derive a lower bound for the MCP.

Our algorithm starts by solving the 
basic SDP relaxation~\eqref{SDP_init}, providing a first lower bound 
$\alpha_{\text{init}}$ and the associated solution $\bar{X}_{\text{init}}$.  
Some constraints violated by the current solution $\bar{X}_{\text{init}}$, are 
then determined with Algorithm~\ref{alg:SA_cuts}. In particular, 
at each iteration, Algorithm~\ref{alg:SA_cuts} is run 
$2\texttt{numCuts}$ times, and 
the \texttt{numCuts} most violated constraints are added. 
%The time 
%complexity of this search is $\mathcal{O}(\texttt{n\_cuts}\cdot n²)$ in the 
%worst case, that is, for our experiments, $\mathcal{O}(n^4)$. 
These violated constraints are then added to the SDP, which is solved 
again to obtain a new improved lower bound $\alpha$ and a new current solution 
$\bar{X}$. 
These steps are repeated for a fixed number of 
iterations   \revision{\texttt{maxIterCP}}, or until the improvement of the 
bound does not 
reach a certain 
threshold.

To reduce the computational effort, at each iteration the constraints that 
seem to be not necessary for obtaining the bound with the SDP are removed. To 
do so, the mean $\gamma_{\text{mean}}$ 
of all absolute values of all dual values $\gamma$ associated with each added 
inequality is computed. 
%as 
%$$\gamma_{\text{mean}} = \frac{ \sum\limits_{\gamma \in \Gamma } 
%\gamma}{m},$$ 
%where $\Gamma$ is the set of all the dual values associated with an added 
%cut, and 
%$m$ is the number of added cuts.
The constraints that are associated with a dual value  $|\gamma| < 
0.01\gamma_{\text{mean}}$ are then removed.
The pseudocode of our algorithm can be found in 
Algorithm~\ref{alg:CW}.

\RestyleAlgo{ruled}

\begin{algorithm}[tbp]
	
	\DontPrintSemicolon
	\SetKwInput{KwInput}{Input}                % Set the Input
	\SetKwInput{KwOutput}{Output}              % set the Output
	\SetKwInput{KwParameters}{Parameters} 
	
	\KwParameters{ \revision{\texttt{maxIterCP}}, \texttt{improvementMin}, 
	\texttt{numCuts}, 
	\texttt{minViolation}}
	\KwInput{adjacency matrix $A$, array of types of constraints 
	\texttt{constraintsToTest}}
	solve the SDP relaxation~\eqref{SDP_init} to obtain  
	 $\bar{X}_{\text{init}}$ and the lower bound 
	$\alpha_{\text{init}}$\;
	$\bar{X} \longleftarrow \bar{X}_{\text{init}}$\;
	$\alpha \longleftarrow \alpha_{\text{init}}$\;
	iter $ \longleftarrow 0$\;
	improvement $ \longleftarrow +\infty$\;
	addedCuts = $\emptyset$\;
	\While{\upshape{iter} < \upshape{ \revision{\texttt{maxIterCP}}} and 
		\upshape{improvement} $>$ \upshape{\texttt{improvementMin}} }{
		iter $\longleftarrow$ iter + 1\;
		update \texttt{constraintsToTest}\;
		\For{\upshape{iterTemp in $\{1, \dots, 2\texttt{numCuts}\}$}}{
			(cutTemp,violationTemp) $\longleftarrow$
			Algorithm~\ref{alg:SA_cuts} with input $\bar{X}$, 
			\texttt{constraintsToTest}\;
			\If{\upshape{violationTemp} > \upshape{\texttt{minViolation}}}{
				add cutTemp to addedCuts}
		}
		addedCuts $\longleftarrow$ \texttt{numCuts} most violated cuts from 
		addedCuts\;
		solve~\eqref{SDP_init} with all inequalities from addedCuts to obtain 
		$\bar{X}_{\text{new}}$, $\alpha_{\text{new}}$, and the dual values 	
		$\gamma$ associated with each added inequality from 
		addedCuts\;
		$\gamma_{\text{mean}} \longleftarrow$ mean of the absolute values of 
		all dual values $\gamma$\;
		remove constraints with dual value  $|\gamma| < 
		0.01\gamma_{\text{mean}}$ from 
		addedCuts\;
		improvement $\longleftarrow \alpha_{\text{new}} - \alpha$\;
		$\bar{X} \longleftarrow \bar{X}_{\text{new}}$\;
		$\alpha \longleftarrow \alpha_{\text{new}}$\;
	}
	\Return $\alpha$, $\bar{X}$
	\caption{Cutting-plane algorithm to obtain a lower bound for the MCP 
	}\label{alg:CW}
\end{algorithm}

%\subsubsection{Improving the efficiency}
%To improve the efficiency of the algorithm, we add the 3-dicycle equations~\eqref{eq:3cycle} iteratively as part of the cuts instead of setting all of them as constraints in the initial relaxation. Indeed, our experiments have shown this to improve the running time of the algorithm without deteriorating the final bound. At each iteration, $2n^2$ violated inequalities are separated, using a simulated annealing heuristic. The algorithm runs for $7$ iterations, a number empirically decided as being a good trade-off between quality of the bound and running time, or until the bound does not increase anymore.

The update of \texttt{constraintsToTest} is done in the following way to 
improve the efficiency.
Empirically, the constraints added in the first two iterations are mostly
3-dicycle equations~\eqref{eq:3dicycleSDP}. This is consistent with the fact 
that they are the ones  
adding the most to the structure of the problem. From the second or third 
iteration on (depending mostly on the size of the instances), the triangle 
inequalities~\eqref{eq:triangle} represent the most part of the added cuts. 
Thus, in the first two 
iterations of our algorithm only violated 3-dicycle equations are added. 
Triangle inequalities are then added to our pool of potential constraints 
\texttt{constraintsToTest} for the 
third and fourth iterations, and all constraints are considered for the 
remaining iterations of the algorithm.

\subsection{Our new heuristic for computing an upper bound}

We can utilize the SDP relaxation~\eqref{eq:sdpCutwidth} of the MCP not only
 to derive lower bounds as seen in Section~\ref{sec:algoLB}, but also to obtain
  upper bounds on the cutwidth. In particular, our aim is to derive a feasible 
  solution of the 
  MCP, and
   thus an upper bound, from the SDP solution $\bar{X}$ returned by
    Algorithm~\ref{alg:CW}. 

For that purpose, the first column $x$ of $\bar{X}$ is considered, without
 its first element (equal to $1$ by definition of $\bar{X}$). 
So $x = (x_{ij})_{1\leq i < j \leq n}$ is a vector of size $\binom{n}{2}$,
 with entries between $0$ and $1$. For any $i \in V= \set{1, 2, \dots, n}$, we 
 compute the
  relative position $p_i$ of vertex $i$ as $$p_i =
   \sum\limits_{\substack{j \in V \\ j>i}} (1-x_{ij}) +
    \sum\limits_{\substack{j \in V \\j<i}} x_{ji}.$$
For any vector $x \in \LOn \cap \set{0,1}^{\binom{n}{2}}$ encoding
 a linear ordering $\pi$, $p_i$ is equal to the number of vertices before the
  vertex $i$ in $\pi$, i.e., to the position of $i$ in the linear ordering. In 
  the
   general case of a vector $x$ obtained from the SDP
    relaxation~\eqref{SDP_init} (with or without added cuts), and not 
    necessarily
     feasible for the MCP, the  $p_i$ are first sorted in ascending order:
      let $(i_1, \ldots, i_n)$ be an ordering of $V$
       such that $p_{i_{1}} \leq p_{i_{2}} \leq \ldots \leq p_{i_{n}}$.
A linear ordering $\pi$ can then be retrieved by assigning to each vertex $i$ 
the
 position of $p_i$ in the sorted list, so $\pi(i_1) = 1, \pi(i_2) = 2,
  \ldots, \pi(i_n) = n$.
The pseudocode of this algorithm can be found in Algorithm~\ref{alg:UB}.
 
This \revision{algorithm} provides a feasible linear ordering for the MCP, which we obtained directly
 from a feasible SDP solution. Thus, the cutwidth of this ordering is an
  upper bound for the MCP. This upper bound is then improved by running a
   simulated annealing heuristic, that uses the obtained feasible solution as
    starting solution. \revision{ In this heuristic, random neighbors of the 
    current feasible ordering are obtained by swapping at random two vertices 
    of the ordering, using the function \textit{random\_neighbor\_ordering}(). 
    This function takes as argument the current feasible solution, as well as 
    the list of already visited orderings, to ensure that no ordering 
    is considered more than once in the algorithm. The cutwidth of an 
    ordering in a given graph is computed via the function \textit{cutwidth}(). The pseudocode 
    of the heuristic can be found in Algorithm~\ref{alg:SA_UB}.}

\RestyleAlgo{ruled}
\begin{algorithm}[tbph]
	
	\DontPrintSemicolon
	\SetKwInput{KwInput}{Input}                % Set the Input
		
	\KwInput{$V= \set{1, 2, \dots, n}$, matrix $\bar{X}$ returned by 
	Algorithm~\ref{alg:CW}}
	$x =  (x_{ij})_{1\leq i < j \leq n} \longleftarrow $ first row of $\bar{X}$ 
	without the first entry \;
	\ForEach{$i \in V$}{
		$p_i \longleftarrow \sum\limits_{\substack{j \in V\\ j>i}} (1-x_{ij}) + \sum\limits_{\substack{j \in V\\j<i}} x_{ji}$ \;
	}
	$(i_1, \ldots, i_n) \longleftarrow \text{ordering of } V$
	 such that $p_{i_{1}} \leq p_{i_{2}} \leq \ldots \leq p_{i_{n}}$\;
	\ForEach{$k \in V$}{
		$\pi(i_k) \longleftarrow k$ \;
	}
	\Return $\pi$
	\caption{Obtaining a linear ordering from a feasible SDP 
	solution}\label{alg:UB}
\end{algorithm}

\RestyleAlgo{ruled}
\begin{algorithm}[tbph]
	
	\DontPrintSemicolon
	\SetKwInput{KwInput}{Input}                % Set the Input
	\SetKwInput{KwParameters}{Parameters} 
	
	\revision{
	\KwParameters{$T_{\text{init}}^{\scriptscriptstyle{UB}}$, 
	$f_t^{\scriptscriptstyle{UB}}$, \texttt{maxIterUB}, 
	\texttt{maxLenPlateauUB}}
	\KwInput{graph $G$, linear ordering $\pi$}
		
	iter $\longleftarrow$ 0\;
	lenPlateau $\longleftarrow$ 0\;
	$T \longleftarrow T_{\text{init}}^{\scriptscriptstyle{UB}}$\;
	
	currentOrdering $\longleftarrow$ $\pi$\;
	currentCutwidth $\longleftarrow$ \textit{cutwidth}($G$, currentOrdering)\;
	bestOrdering $\longleftarrow$ currentOrdering\;
	bestCutwidth $\longleftarrow$ currentCutwidth\;
	visitedOrderings $\longleftarrow$ $[\mbox{currentOrdering}]$\;
	
	\While {\upshape{lenPlateau} < \upshape{\texttt{maxLenPlateauUB}} and 
		\upshape{iter} < \upshape{\texttt{maxIterUB}}}
	{
		iter $\longleftarrow$ iter + 1\;
		lenPlateau $\longleftarrow$ lenPlateau + 1\;
		neighborOrdering $\longleftarrow$ 
		\textit{random\_neighbor\_ordering}(currentOrdering, 
		visitedOrderings)\;
		append neighborOrdering to visitedOrderings \;
		neighborCutwidth  $\longleftarrow$ 
		\textit{cutwidth}($G$, neighborOrdering)\;
		$\Delta  \longleftarrow$  neighborCutwidth $-$ currentCutwidth 
		\;
		\eIf {$\Delta < 0$}{
			currentOrdering $\longleftarrow$ neighborOrdering\;
			currentCutwidth $\longleftarrow$ neighborCutwidth\;}
		{\If{$\text{random}([0,1])$ < 
		$e^{-\frac{\Delta}{T}}$}
			{currentOrdering $\longleftarrow$ neighborOrdering\;
				currentCutwidth $\longleftarrow$ neighborCutwidth\;
				$ T \longleftarrow T \cdot 
				f_t^{\scriptscriptstyle{UB}}$}
			}
			\If{\upshape{currentCutwidth} < \upshape{bestCutwidth}}
			{bestOrdering  $\longleftarrow$ currentOrdering\;
				bestCutwidth $\longleftarrow$ currentCutwidth\;
				lenPlateau $\longleftarrow$ 0
			}			
			}	
		\Return bestOrdering, bestCutwidth
	\caption{Simulated annealing to improve the upper bound for the MCP  
	given by an initial linear ordering}\label{alg:SA_UB}
	}
	\end{algorithm}

\section{Computational experiments}\label{sec:computations}
In this section we evaluate the quality of the bounds obtained by
our algorithms through numerical results.
We compare the bounds to those obtained by modeling the problem
as MILP.

\subsection{Computational setup}

 Our algorithm was implemented in Python, using the graph library 
 \texttt{networkx}. The SDP relaxations were solved using the \cite{mosek} 
 Optimizer 
 API version~10.
 We use CPLEX \revision{by}~\cite{cplex} version~22.1 to solve the MILP in 
 order to
 compare bounds and computation times of our approach to \revision{a} MILP
 approach from the literature. 
 We ran the tests on an Intel Xeon W-2125 CPU~@~4.00GHz with 
 4~Cores /  8~vP  with 256~GB RAM.
 The code and the instances are available as ancillary files from the arXiv 
 page of this paper\revision{: \href{https://arxiv.org/abs/2301.03900}{arXiv:2301.03900}}.

For the experiments, the parameters introduced in 
\revision{Algorithms~\ref{alg:SA_cuts}, \ref{alg:CW} and~\ref{alg:SA_UB}} 
were set as follows:
$T_{\text{init}} = 0.042$, $f_t = 0.97$, 
$\revision{\texttt{maxIterSep}} = \binom{n}{2}$, 
\texttt{maxLenPlateau} = $\floor{\frac{n}{2}}$, 
$\revision{\texttt{maxIterCP}} = 7$, 
\texttt{improvementMin} = $10^{-2}$, \texttt{numCuts} = $2n²$, and 
\texttt{minViolation} = $10^{-4}$, 
\revision{$T_{\text{init}}^{\scriptscriptstyle{UB}} = 0.25$, 
$f_t^{\scriptscriptstyle{UB}} = 0.99$, \texttt{maxIterUB} =  $\binom{n}{2}$, 
\texttt{maxLenPlateauUB} = $5n$}.

\subsection{Instances}\label{sec:instances}
 \revision{There are several sets of instances commonly used in the literature 
 as benchmark for linear ordering problems. However, almost all of
 these benchmark instances are very sparse. This is due to the fact
 that so far they
 have been tested mainly on MILP-based approaches and these approaches
 are applicable on sparse instances only.
 For evaluating our bounds on instances of varying size and density,
 we generate new instances, namely the Erd\H{o}s-R{\'e}nyi graphs and
 random geometric graphs, as described in Section~\ref{sec:densegraphs}.

 \subsubsection{Benchmark instances in the literature}

 On these benchmarks instances, MILP-based methods are, due to the
 sparsity of the graphs, typically
 performing much better than SDP-based methods, hence we will
 refrain from reporting detailed results. However,
 for completeness we give here the specifics and sources of these instances. 

  The \textit{Small} dataset, 
 introduced for the Bandwidth Minimization Problem by~\cite{marti2008branch} 
 and already used several times for evaluating algorithms for the CMP, 
 consists in $84$ graphs of order between $16$ and $24$, and densities 
 ranging from $0.07$ to $0.21$. 
 
 The \textit{Rome graphs} dataset, introduced in~\cite{di1997experimental}, 
 is  formed of $11534$ graphs whose number of vertices ranges between $10$ 
 and $100$, and densities in average below $0.1$, and rarely above $0.2$.
 
 The \textit{grid} dataset was proposed by~\cite{raspaud1995debruijn} and 
 consists of $81$ rectangular grids of sizes $\mbox{width} \times 
 \mbox{height}$ where $\mbox{width}$ and $\mbox{height}$ are chosen from the 
 set $\{3,6,9,12,15,18,21,24,27\}$. The optimal values of the MCP for these 
 instances are known (see \cite{raspaud2009antibandwidth}).
 
The \textit{Harwell-Boeing} dataset (available at~\cite{boisvert1997matrix}) 
is a set of (mostly) sparse matrices obtained from problems in linear 
systems, least squares, and eigenvalue calculations from diverse 
scientific and engineering disciplines. Graphs are obtained from these 
matrices by considering an edge for every nonzero element of the matrix. For 
experiments on linear ordering problems, a subset of $87$ matrices is 
commonly used, with numbers of vertices ranging from $30$ to $700$, and numbers of
edges from $46$ to $41686$.

The \textit{Small}, \textit{Harwell-Boeing} and \textit{grid} datasets can be 
found at~\cite{optsicom}, and the \textit{Rome graphs} dataset can be 
accessed at~\cite{coudertInstances}. 

Almost all these instances are, as stated, very sparse, with densities most 
of the time much below 20\% and very often even below 10\%. 

\subsubsection{Erd\H{o}s-R{\'e}nyi graphs and random geometric graphs}\label{sec:densegraphs}
It is expected that the MCP on denser graphs is much more challenging to be
solved by MILP-based approaches, whereas our SDP-based approach is little to no 
impacted by density. As such, its relevance for computing lower bounds for 
the MCP becomes much greater for denser instances, on which existing 
algorithms struggle. 
To do experiments on graphs with various densities,
we generated instances according to well established models: 
Erd\H{o}s-R{\'e}nyi graphs and random geometric graphs.

}
 
%The benchmark instances commonly used in the literature for linear ordering 
%problems are extremely sparse. MILP based approaches 
%are thus very efficient on them, as demonstrated by the results obtained by 
%%David Coudert in~
%\cite{coudert2016}. However, it is expected that denser graphs 
%are much more challenging to solve by such approaches.
%Among the graphs considered \revision{by} \cite{coudert2016}, the densest
%instance has 22~vertices and 49~edges, i.e., a density of
%approximately 21\%. Most of the instances have densities much below
%20\% or even below 10\%.

%As we want to evaluate our bounds for graphs with various densities,
%and in particular for dense graphs, we generated instances according
%to well established models: Erd\H{o}s-R{\'e}nyi graphs and random geometric 
%graphs.

 An Erdős-Rényi 
 graph $G(n,p)$ is a random graph with $n$ vertices generated by including each 
 possible edge with probability $p$ (independent from the inclusion of other 
 edges). Our set 
 is composed of $30$ Erdős-Rényi graphs $G(n,p)$ with $n \in 
 \{20,30,40,50\}$ and $p \in \{0.3,0.4,0.5,0.6,0.7,0.8,0.9\}$.
 
 %Our second set of instances is composed of random geometric graphs. 
 A 
 random
 geometric graph $U(n,d)$ is generated by first placing $n$ vertices uniformly 
 at 
 random 
 in the unit cube. Two vertices are then joined by an edge if and only if the 
 Euclidian distance between them is at most $d$, see for 
 example~\cite{johnson1989optimization}. 
 Our set created this way is composed of $45$ graphs with $n 
 \in 
 \{20,30,40,50\}$, the distances $d$ are chosen from the set $\{0.3, 
 0.4, 0.5, 0.6, 0.7, 0.8, 0.9\}$.

\subsection{Numerical results}
\revision{
\subsubsection{Numerical results on the benchmark instances in the literature}
  As already mentioned, on the very sparse graphs used in the
  literature so far, our algorithm is not competitive with the
MILP-based approaches.
For example, the algorithm of~\cite{coudert2016} manages to solve all
instances of the \textit{Small} dataset to optimality with an average
computational time of $3.2$ seconds.  
The results provided by our algorithm have a final gap 
between upper and lower bound ranging between $20\%$ and $63\%$, with a mean of 
$44\%$, obtained with an average time sightly over a minute.
This looks similar for the other instances: we obtain bounds of
moderate quality and in reasonable time but for these sparse
instances, the MILP-based approaches perform much better. We 
therefore refrain from listing the details in long tables.

Moreover, most instances from 
the \textit{Harwell-Boeing} and \textit{grid} datasets, as well as a big 
part of those from the \textit{Rome graphs} dataset are too big to be 
solved by the MILP-based approaches or using our algorithm in
reasonable time.

\subsubsection{Numerical results on Erd\H{o}s-R{\'e}nyi graphs and random geometric graphs}
In this section we provide a detailed evaluation of the
numerical experiments on the graphs with varying density described in
Section~\ref{sec:densegraphs}.}

Figure~\ref{fig:boundevolvement} shows the evolution of the bound computed by
our algorithm over seven iterations considering different sets of constraints on the
example of an Erd\H{o}s-R{\'e}nyi graph on $n=20$ vertices.
Note that in the initial iteration no constraints are added and in the
first iteration always only the 3-dicycle equations~\eqref{eq:3dicycleSDP} are
considered. The circles in Figure~\ref{fig:boundevolvement} show that the
bound does not move anymore from 
iteration three on if only considering constraints~\eqref{eq:3dicycleSDP}.
Adding the triangle inequalities~\eqref{eq:triangle} to the pool of
possible constraints leads to a remarkable increase of the bound,
cf. the triangles in the plot. 
And offering additionally the inequalities obtained from lifting the
linear ordering polytope~\eqref{eq:lin3d} and the inequalities from the
squared linear ordering polytope of order four~\eqref{eq:LOtwofour} 
further pushes the bound above with neglectable extra cost, cf. the
stars in Figure~\ref{fig:boundevolvement}. 
The plots are similar for all the instances we consider. Hence, these
experiments confirm our choice of using all the
constraints~\eqref{eq:3dicycleSDP}, \eqref{eq:triangle},
\eqref{eq:LOtwofour} and~\eqref{eq:lin3d} as
potential strengthenings within our algorithm.

\begin{figure}[tbph]
	\caption{Evolution of the bound over the iterations of Algorithm~\ref{alg:CW}, \\ 
	Erd\H{o}s-R{\'e}nyi graph with $n=20$ and $p = 0.8$, $7 \text{ 
	iterations}$}\label{fig:boundevolvement}
	\centering
	\includegraphics[width=0.75\textwidth]{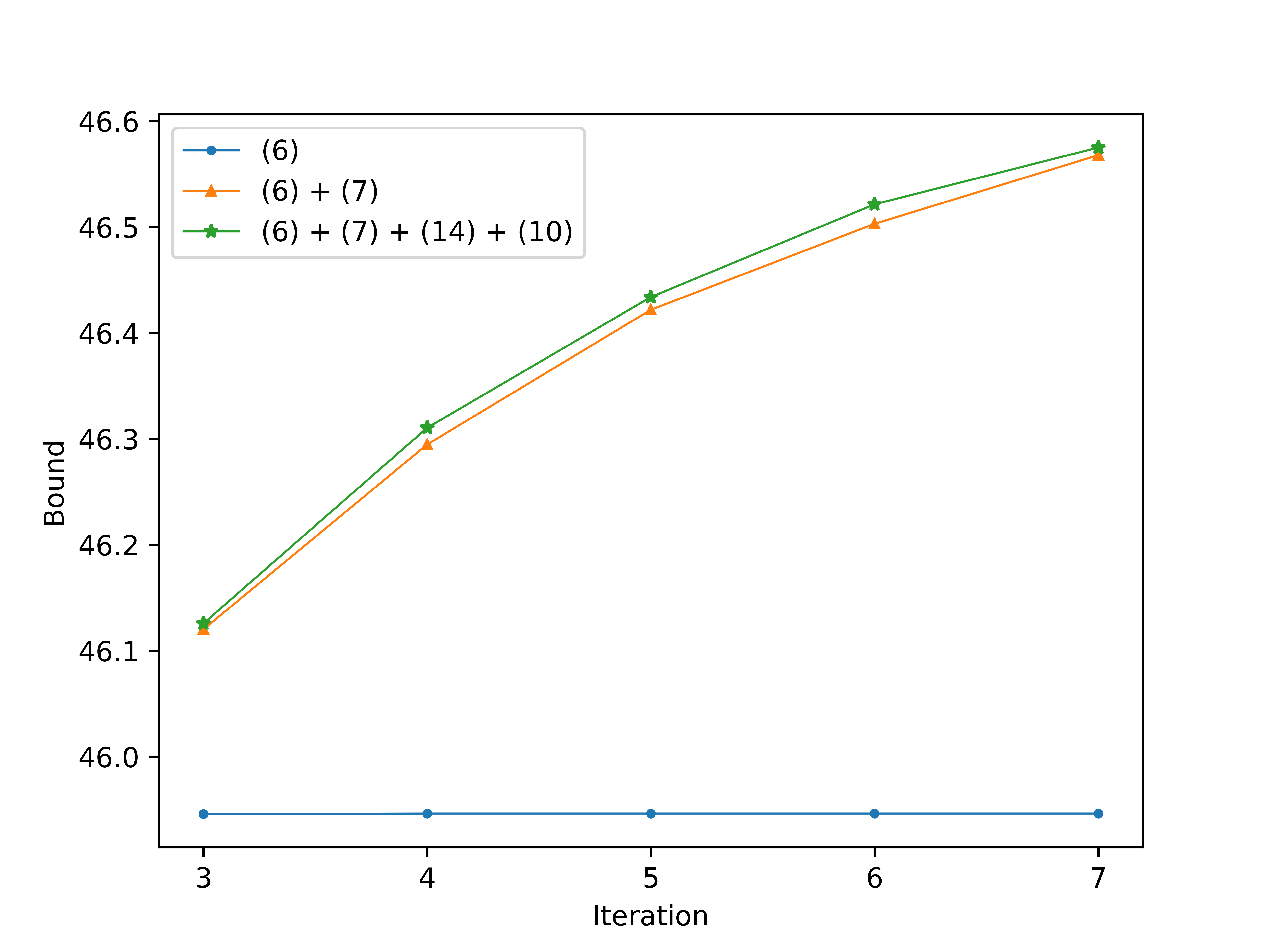}
\end{figure}

In Tables~\ref{tab:randomgraphs-sdp}--\ref{tab:geometricgraphs-ilp} we give
the numerical results for the above mentioned instances, comparing our
SDP bounds with those obtained when using an MILP solver. The column
label $n$ refers to the number of vertices of the graph, $p$ and $d$
relates to the edges of the graph as specified in
Section~\ref{sec:instances} above.

In Tables~\ref{tab:randomgraphs-sdp} and~\ref{tab:geometricgraphs-sdp}
columns ``LB init'' list the lower bounds
obtained when solving the initial basic SDP relaxation without any cutting
planes, while in column ``LB final'' the lower bound we finally
obtained is displayed. ``UB'' is the upper bound obtained through our
heuristic and ``gap final'' is computed as $(\mathrm{UB} -
\mathrm{LB})/\mathrm{UB}$, where LB is the final lower bound.
In the column with the ``time'' label, we report the total time for
obtaining our lower bound, and how this total time is split into
solving the SDPs and separating the violated inequality and equality 
constraints. Moreover,
the time for computing the upper bound is given.
The last column of these two tables indicate the number of cutting
planes added when the algorithm stops.
Tables~\ref{tab:randomgraphs-ilp} and~\ref{tab:geometricgraphs-ilp}
give the details when using CPLEX to obtain
an (approximate) solution. 

In the tables we see that adding the valid inequalities and equalities 
significantly
improves the initial bound, in Table~\ref{tab:randomgraphs-sdp} 
the value of the bound increases by roughly 30\%, for the random geometric graphs
(Table~\ref{tab:geometricgraphs-sdp}) it is 
around~20 to 30\%. Also the upper bounds we obtained are of good
quality, overall the gap between our final lower bounds and the upper
bounds is from 30\% to 46\% for the Erd\H{o}s-R{\'e}nyi graphs and
between 27\% and 53\%
for the random geometric graphs.

The time spent to compute the lower bounds ranges from a few seconds
for the instances with 20~vertices up to 115~minutes for instances
with 50~vertices. For the small instances, solving the SDPs can be done rather 
quickly and therefore, the time spent in
the separation takes the bigger ratio. For larger instances, however,
separating violated inequalities and equalities typically uses less than 20\% of the 
overall
time. Computing the upper bounds is done within less than 40~seconds.

We now turn our attention to comparing our results with using the MILP
solver CPLEX, see Tables~\ref{tab:randomgraphs-ilp}
and~\ref{tab:geometricgraphs-ilp}. As all our SDP based results are
obtained within 
2~hours, we set this as time limit for CPLEX. On small and sparse
instances CPLEX performs very well. However, for the
Erd\H{o}s-R{\'e}nyi graph with $n=40$ and 70\% density we are left with a
gap of 45\% after two hours,
compared to the gap of 34\% that we obtain with our algorithm in less than 30~minutes.
The random geometric graphs with $d$ equal to~$0.3$ or $0.4$ can be
solved for graphs with up to 40~vertices, for dense instances with at least 
$40$ vertices a gap of
more than 35\% remains after two hours run time. In general, the
density has a huge impact on the runtime for the MILP solver. For example, it
increases from 3.45~seconds for $p=0.3$ to 1206.53~seconds for $p=0.9$
for Erd\H{o}s-R{\'e}nyi graphs on 20~vertices. Our SDP based bounds do not show
significant differences for sparse and dense instances for the
Erd\H{o}s-R{\'e}nyi graphs, and only a minor increase of the runtime for
the random geometric graphs.

\revision{
	To get a better understanding of the quality of our lower and upper bounds, we 
	computed the optimal solutions for some instances by running the 
	MILP-model up to several days of CPU-time. The results are displayed in 
	Tables \ref{tab:erdosrenyi-exactvalues} and 
	\ref{tab:geometricgraphs-exactvalues}, in which the columns labeled $n$, $p$, 
	$d$ and ``density'' relate to the same instance parameters as previously 
	specified. Columns ``LB'' and ``UB'' list the lower and upper 
	bounds, respectively, obtained by our algorithm.
        In column ``opt'' the optimal value 
	of the CMP obtained with the MILP-model is given, and the last column 
	shows the deviation of our lower bound from the optimum, i.e., the value 
	$(\text{opt}- \text{LB}) / \text{opt}$.
	These tables show that, for most of these instances, our upper 
	bounds 
	are rather close to the optimal value, and consequently that most of 
	the gap comes from the lower bound. Indeed,  
	our lower bound is always approximately 30\% away from the optimum for 
	the Erd\H{o}s-R{\'e}nyi graphs, and most of the time
	between 20\% and 30\% for the geometric graphs. As this behavior seems 
	consistent, we can presume that it stays similar for the bigger and denser
	instances, for which we cannot compute the optimum in reasonable time. 
	Another observation is that for the smallest 
	instances the quality of the upper bound increases with 
	the density of the graph, but it is unclear whether this is also the 
	case for bigger instances.
}

Overall, the MILP approach is clearly outperformed by our method for
dense graphs but also for graphs having~40 or 50~vertices, independent
of their density.

\begin{table}[tbph]
		\caption{Bounds and times for the MCP using the SDP relaxation
		on Erdős–Rényi graphs}
	\label{tab:randomgraphs-sdp}
	\centering
	\small
	
	\begin{tabular}{@{}ccrrrrrrrrr@{}}
		\toprule
		&                      & 
		\multicolumn{4}{c}{bounds}                                            
		                                    &
		 
		\multicolumn{4}{c}{time}                                              
		                                  &
		 \multicolumn{1}{c}{}       \\ \cmidrule(lr){3-6} \cmidrule(lr){7-10}
		&                      & \multicolumn{1}{c}{LB}   & 
		\multicolumn{1}{c}{LB}    & \multicolumn{1}{c}{}   & 
		\multicolumn{1}{c}{gap}   & \multicolumn{1}{c}{LB}    & 
		\multicolumn{1}{c}{LB}  & \multicolumn{1}{c}{LB}   & 
		\multicolumn{1}{c}{}   & \multicolumn{1}{c}{}       \\
		$n$                   & $p$                    & 
		\multicolumn{1}{c}{init} & \multicolumn{1}{c}{final} & 
		\multicolumn{1}{c}{UB} & \multicolumn{1}{c}{final} & 
		\multicolumn{1}{c}{total} & \multicolumn{1}{c}{SDP} & 
		\multicolumn{1}{c}{sep.} & \multicolumn{1}{c}{UB} & 
		\multicolumn{1}{c}{\#cuts} \\ \midrule
		20                   & 0.3                  & 
		10.99                    & 14.27                     & 
		26                     & 0.42                      & 
		62.62                     & 23.37                   & 
		38.28                    & 0.36                   & 
		3697                       \\
		20                   & 0.4                  & 
		17.05                    & 21.54                     & 
		36                     & 0.39                      & 
		61.83                     & 22.61                   & 
		38.28                    & 0.32                   & 
		3826                       \\
		20                   & 0.5                  & 
		17.14                    & 21.92                     & 
		41                     & 0.46                      & 
		63.94                     & 23.47                   & 
		39.40                    & 0.44                   & 
		3919                       \\
		20                   & 0.6                  & 
		28.89                    & 36.40                     & 
		55                     & 0.33                      & 
		63.38                     & 23.23                   & 
		39.02                    & 0.51                   & 
		3832                       \\
		20                   & 0.7                  & 
		30.59                    & 38.65                     & 
		57                     & 0.32                      & 
		63.59                     & 23.37                   & 
		39.20                    & 0.39                   & 
		3806                       \\
		20                   & 0.8                  & 
		37.12                    & 46.59                     & 
		68                     & 0.31                      & 
		62.43                     & 21.74                   & 
		39.45                    & 0.62                   & 
		3789                       \\
		20                   & 0.9                  & 
		46.87                    & 59.47                     & 
		88                     & 0.32                      & 
		59.05                     & 19.58                   & 
		38.37                    & 0.49                   & 
		3267                       \\
		\multicolumn{1}{l}{} & \multicolumn{1}{l}{} & 
		\multicolumn{1}{l}{}     & \multicolumn{1}{l}{}      & 
		\multicolumn{1}{l}{}   & \multicolumn{1}{l}{}      & 
		\multicolumn{1}{l}{}      & \multicolumn{1}{l}{}    & 
		\multicolumn{1}{l}{}     & \multicolumn{1}{l}{}   & 
		\multicolumn{1}{l}{}       \\
		30                   & 0.3                  & 
		23.65                    & 30.69                     & 
		48                     & 0.35                      & 
		365.83                    & 206.53                  & 
		153.80                   & 2.21                   & 
		9364                       \\
		30                   & 0.4                  & 
		32.20                    & 41.79                     & 
		76                     & 0.45                      & 
		370.88                    & 212.26                  & 
		153.67                   & 1.67                   & 
		9244                       \\
		30                   & 0.5                  & 
		47.39                    & 60.55                     & 
		95                     & 0.36                      & 
		376.59                    & 215.95                  & 
		155.00                   & 2.33                   & 
		9687                       \\
		30                   & 0.6                  & 
		55.72                    & 70.84                     & 
		101                    & 0.30                      & 
		400.35                    & 235.57                  & 
		158.45                   & 3.01                   & 
		10149                      \\
		30                   & 0.7                  & 
		71.93                    & 91.86                     & 
		143                    & 0.36                      & 
		379.24                    & 220.62                  & 
		152.50                   & 2.65                   & 
		9160                       \\
		30                   & 0.8                  & 
		86.47                    & 110.58                    & 
		168                    & 0.34                      & 
		401.19                    & 237.39                  & 
		156.08                   & 4.33                   & 
		8949                       \\
		30                   & 0.9                  & 
		93.63                    & 119.93                    & 
		181                    & 0.34                      & 
		392.83                    & 233.53                  & 
		153.09                   & 2.90                   & 
		9106                       \\
		\multicolumn{1}{l}{} & \multicolumn{1}{l}{} & 
		\multicolumn{1}{l}{}     & \multicolumn{1}{l}{}      & 
		\multicolumn{1}{l}{}   & \multicolumn{1}{l}{}      & 
		\multicolumn{1}{l}{}      & \multicolumn{1}{l}{}    & 
		\multicolumn{1}{l}{}     & \multicolumn{1}{l}{}   & 
		\multicolumn{1}{l}{}       \\
		40                   & 0.3                  & 
		46.15                    & 60.12                     & 
		92                     & 0.34                      & 
		1505.80                   & 1092.75                 & 
		396.12                   & 4.97                   & 
		17300                      \\
		40                   & 0.4                  & 
		66.42                    & 85.52                     & 
		126                    & 0.32                      & 
		1597.97                   & 1169.59                 & 
		407.50                   & 8.91                   & 
		18712                      \\
		40                   & 0.5                  & 
		77.24                    & 99.75                     & 
		155                    & 0.35                      & 
		1495.82                   & 1091.20                 & 
		387.26                   & 5.41                   & 
		16655                      \\
		40                   & 0.6                  & 
		101.62                   & 130.84                    & 
		195                    & 0.33                      & 
		1593.94                   & 1176.42                 & 
		399.59                   & 5.87                   & 
		17849                      \\
		40                   & 0.7                  & 
		120.73                   & 155.30                    & 
		235                    & 0.34                      & 
		1691.67                   & 1271.61                 & 
		397.09                   & 10.94                  & 
		17663                      \\
		40                   & 0.8                  & 
		146.23                   & 187.95                    & 
		281                    & 0.33                      & 
		1723.79                   & 1299.59                 & 
		400.07                   & 12.10                  & 
		18293                      \\
		40                   & 0.9                  & 
		176.23                   & 227.74                    & 
		346                    & 0.34                      & 
		1703.14                   & 1291.81                 & 
		391.63                   & 7.64                   & 
		16651                      \\
		\multicolumn{1}{l}{} & \multicolumn{1}{l}{} & 
		\multicolumn{1}{l}{}     & \multicolumn{1}{l}{}      & 
		\multicolumn{1}{l}{}   & \multicolumn{1}{l}{}      & 
		\multicolumn{1}{l}{}      & \multicolumn{1}{l}{}    & 
		\multicolumn{1}{l}{}     & \multicolumn{1}{l}{}   & 
		\multicolumn{1}{l}{}       \\
		50                   & 0.3                  & 
		70.48                    & 91.92                     & 
		159                    & 0.42                      & 
		5310.93                   & 4359.63                 & 
		901.09                   & 16.17                  & 
		30416                      \\
		50                   & 0.4                  & 
		102.90                   & 133.61                    & 
		211                    & 0.36                      & 
		5404.70                   & 4436.08                 & 
		905.93                   & 28.63                  & 
		30238                      \\
		50                   & 0.5                  & 
		140.54                   & 182.44                    & 
		286                    & 0.36                      & 
		5313.71                   & 4396.23                 & 
		863.22                   & 20.16                  & 
		28586                      \\
		50                   & 0.6                  & 
		170.81                   & 221.14                    & 
		338                    & 0.34                      & 
		5511.41                   & 4594.26                 & 
		858.48                   & 24.56                  & 
		28417                      \\
		50                   & 0.7                  & 
		196.65                   & 253.98                    & 
		382                    & 0.34                      & 
		5552.34                   & 4610.50                 & 
		890.25                   & 17.42                  & 
		30954                      \\
		50                   & 0.8                  & 
		241.24                   & 312.20                    & 
		468                    & 0.33                      & 
		6303.39                   & 5365.25                 & 
		881.05                   & 22.80                  & 
		29710                      \\
		50                   & 0.9                  & 
		270.41                   & 350.87                    & 
		525                    & 0.33                      & 
		6033.16                   & 5106.08                 & 
		874.43                   & 18.43                  & 
		29418                      \\ \bottomrule
	\end{tabular}
\end{table}

\begin{table}[tbph]
		\caption{Bounds and times for the MCP using the MILP formulation
		on Erdős–Rényi graphs}
	\label{tab:randomgraphs-ilp}
	\centering
	\small
	\begin{tabular}{@{}ccrrrr@{}}
		\toprule
		$n$ & $p$ & \multicolumn{1}{c}{LB} & \multicolumn{1}{c}{UB} & \multicolumn{1}{c}{gap} & \multicolumn{1}{c}{time} \\ \midrule
%		&     &                        &                        
%&                         &                          \\
		20  & 0.3 & 20.00                  & 20                     & 0.00                    & 3.45                     \\
		20  & 0.4 & 32.00                  & 32                     & 0.00                    & 118.43                   \\
		20  & 0.5 & 31.00                  & 31                     & 0.00                    & 11.81                    \\
		20  & 0.6 & 52.00                  & 52                     & 0.00                    & 25.58                    \\
		20  & 0.7 & 56.00                  & 56                     & 0.00                    & 33.04                    \\
		20  & 0.8 & 68.00                  & 68                     & 0.00                    & 271.60                   \\
		20  & 0.9 & 88.00                  & 88                     & 0.00                    & 1206.53                  \\
		&     &                        &                        &                         &                          \\
		30  & 0.3 & 44.00                  & 44                     & 0.00                    & 191.15                   \\
		30  & 0.4 & 60.00                  & 60                     & 0.00                    & 203.37                   \\
		30  & 0.5 & 87.00                  & 87                     & 0.00                    & 1165.32                  \\
		30  & 0.6 & 101.00                 & 101                    & 0.00                    & 1326.97                  \\
		30  & 0.7 & 123.33                 & 136                    & 0.09                    & 7200.00                  \\
		30  & 0.8 & 122.53                 & 163                    & 0.25                    & 7200.00                  \\
		30  & 0.9 & 134.75                 & 176                    & 0.23                    & 7200.00                  \\
		&     &                        &                        &                         &                          \\
		40  & 0.3 & 88.00                  & 89                     & 0.01                    & 7200.00                  \\
		40  & 0.4 & 115.37              & 121                    & 0.05                    & 7200.00                  \\
		40  & 0.5 & 105.67                 & 150                    & 0.30                    & 7200.00                  \\
		40  & 0.6 & 117.00                 & 193                    & 0.39                    & 7200.00                  \\
		40  & 0.7 & 127.00                 & 232                    & 0.45                    & 7200.00                  \\
		40  & 0.8 & 143.80                 & 283                    & 0.49                    & 7200.00                  \\
		40  & 0.9 & 150.13                 & 343                    & 0.56                    & 7200.00                  \\
		&     &                        &                        &                         &                          \\
		50  & 0.3 & 91.44                  & 142                    & 0.36                    & 7200.00                  \\
		50  & 0.4 & 105.00                 & 208                    & 0.50                    & 7200.00                  \\
		50  & 0.5 & 106.09                 & 283                    & 0.63                    & 7200.00                  \\
		50  & 0.6 & 113.00                 & 337                    & 0.66                    & 7200.00                  \\
		50  & 0.7 & 123.93                 & 388                    & 0.68                    & 7200.00                  \\
		50  & 0.8 & 123.00                 & 477                    & 0.74                    & 7200.00                  \\
		50  & 0.9 & 128.42                 & 536                    & 0.76                    & 7200.00                  \\
		\bottomrule
	\end{tabular}
\end{table}

\begin{table}[tbph]
		\caption{Bounds and times for the MCP using the SDP relaxation
		on random geometric graphs}
	\label{tab:geometricgraphs-sdp}
	\centering
	\small
	
	\begin{tabular}{@{}cccrrrrrrrrr@{}}
		\toprule
		&                      &                      & 
		\multicolumn{4}{c}{bounds}                                            
		                                    & 
		\multicolumn{4}{c}{time}                                              
		                                  &
		 \multicolumn{1}{c}{}       \\ \cmidrule(lr){4-7} \cmidrule(lr){8-11}
		&                      &                      & 
		\multicolumn{1}{c}{LB}   & \multicolumn{1}{c}{LB}    & 
		\multicolumn{1}{c}{}   & \multicolumn{1}{c}{gap}   & 
		\multicolumn{1}{c}{LB}    & \multicolumn{1}{c}{LB}  & 
		\multicolumn{1}{c}{LB}   & \multicolumn{1}{c}{}   & 
		\multicolumn{1}{c}{}       \\
		$n$                  & $d$                  & density              & 
		\multicolumn{1}{c}{init} & \multicolumn{1}{c}{final} & 
		\multicolumn{1}{c}{UB} & \multicolumn{1}{c}{final} & 
		\multicolumn{1}{c}{total} & \multicolumn{1}{c}{SDP} & 
		\multicolumn{1}{c}{sep.} & \multicolumn{1}{c}{UB} & 
		\multicolumn{1}{c}{\#cuts} \\ \midrule
		20                   & 0.3                  & 0.18                 & 
		4.66                     & 5.24                      & 
		9                      & 0.33                      & 
		42.55                     & 7.02                    & 
		34.74                    & 0.26                   & 
		2592                       \\
		20                   & 0.4                  & 0.28                 & 
		7.16                     & 8.28                      & 
		13                     & 0.31                      & 
		42.69                     & 8.68                    & 
		33.19                    & 0.28                   & 
		2439                       \\
		20                   & 0.5                  & 0.42                 & 
		13.03                    & 16.50                     & 
		24                     & 0.29                      & 
		61.18                     & 20.23                   & 
		40.03                    & 0.32                   & 
		3576                       \\
		20                   & 0.6                  & 0.44                 & 
		14.21                    & 16.56                     & 
		24                     & 0.29                      & 
		57.48                     & 16.69                   & 
		39.87                    & 0.33                   & 
		2989                       \\
		20                   & 0.7                  & 0.72                 & 
		33.23                    & 39.83                     & 
		67                     & 0.40                      & 
		58.37                     & 17.32                   & 
		39.80                    & 0.64                   & 
		3053                       \\
		20                   & 0.8                  & 0.91                 & 
		45.49                    & 56.75                     & 
		83                     & 0.31                      & 
		62.43                     & 19.63                   & 
		41.73                    & 0.43                   & 
		2955                       \\
		20                   & 0.9                  & 0.83                 & 
		39.93                    & 48.57                     & 
		69                     & 0.29                      & 
		59.91                     & 19.82                   & 
		39.07                    & 0.41                   & 
		2631                       \\
		\multicolumn{1}{l}{} & \multicolumn{1}{l}{} & \multicolumn{1}{l}{} & 
		\multicolumn{1}{l}{}     & \multicolumn{1}{l}{}      & 
		\multicolumn{1}{l}{}   & \multicolumn{1}{l}{}      & 
		\multicolumn{1}{l}{}      & \multicolumn{1}{l}{}    & 
		\multicolumn{1}{l}{}     & \multicolumn{1}{l}{}   & 
		\multicolumn{1}{l}{}       \\
		30                   & 0.3                  & 0.19                 & 
		9.17                     & 10.46                     & 
		19                     & 0.42                      & 
		293.96                    & 125.34                  & 
		163.40                   & 2.08                   & 
		6305                       \\
		30                   & 0.4                  & 0.35                 & 
		26.04                    & 32.72                     & 
		65                     & 0.49                      & 
		337.93                    & 167.22                  & 
		165.32                   & 2.14                   & 
		8699                       \\
		30                   & 0.5                  & 0.48                 & 
		42.01                    & 51.65                     & 
		79                     & 0.34                      & 
		346.90                    & 176.51                  & 
		165.47                   & 1.67                   & 
		8342                       \\
		30                   & 0.6                  & 0.75                 & 
		76.20                    & 92.20                     & 
		136                    & 0.32                      & 
		373.08                    & 198.05                  & 
		167.51                   & 4.28                   & 
		7597                       \\
		30                   & 0.7                  & 0.70                 & 
		67.36                    & 84.98                     & 
		121                    & 0.30                      & 
		395.81                    & 223.55                  & 
		165.97                   & 2.98                   & 
		9897                       \\
		30                   & 0.8                  & 0.94                 & 
		105.86                   & 134.97                    & 
		200                    & 0.32                      & 
		391.51                    & 219.12                  & 
		166.81                   & 2.27                   & 
		8296                       \\
		30                   & 0.9                  & 0.96                 & 
		109.07                   & 139.47                    & 
		207                    & 0.32                      & 
		381.37                    & 206.85                  & 
		168.38                   & 2.88                   & 
		7682                       \\
		\multicolumn{1}{l}{} & \multicolumn{1}{l}{} & \multicolumn{1}{l}{} & 
		\multicolumn{1}{l}{}     & \multicolumn{1}{l}{}      & 
		\multicolumn{1}{l}{}   & \multicolumn{1}{l}{}      & 
		\multicolumn{1}{l}{}      & \multicolumn{1}{l}{}    & 
		\multicolumn{1}{l}{}     & \multicolumn{1}{l}{}   & 
		\multicolumn{1}{l}{}       \\
		40                   & 0.3                  & 0.21                 & 
		18.42                    & 22.48                     & 
		34                     & 0.32                      & 
		1253.51                   & 770.35                  & 
		463.22                   & 8.24                   & 
		12957                      \\
		40                   & 0.4                  & 0.31                 & 
		30.87                    & 39.98                     & 
		61                     & 0.34                      & 
		1389.39                   & 895.44                  & 
		477.01                   & 5.13                   & 
		15569                      \\
		40                   & 0.5                  & 0.44                 & 
		55.25                    & 69.41                     & 
		149                    & 0.53                      & 
		1353.95                   & 875.37                  & 
		456.78                   & 9.73                   & 
		15999                      \\
		40                   & 0.6                  & 0.55                 & 
		80.26                    & 97.37                     & 
		135                    & 0.27                      & 
		1416.42                   & 928.89                  & 
		468.95                   & 6.78                   & 
		14286                      \\
		40                   & 0.7                  & 0.74                 & 
		131.38                   & 165.83                    & 
		234                    & 0.29                      & 
		1650.37                   & 1178.59                 & 
		453.00                   & 6.85                   & 
		17886                      \\
		40                   & 0.8                  & 0.84                 & 
		158.23                   & 201.33                    & 
		299                    & 0.32                      & 
		1669.65                   & 1197.72                 & 
		448.71                   & 11.22                  & 
		18253                      \\
		40                   & 0.9                  & 0.95                 & 
		190.22                   & 245.12                    & 
		363                    & 0.32                      & 
		1758.20                   & 1290.30                 & 
		448.63                   & 7.24                   & 
		17238                      \\
		\multicolumn{1}{l}{} & \multicolumn{1}{l}{} & \multicolumn{1}{l}{} & 
		\multicolumn{1}{l}{}     & \multicolumn{1}{l}{}      & 
		\multicolumn{1}{l}{}   & \multicolumn{1}{l}{}      & 
		\multicolumn{1}{l}{}      & \multicolumn{1}{l}{}    & 
		\multicolumn{1}{l}{}     & \multicolumn{1}{l}{}   & 
		\multicolumn{1}{l}{}       \\
		50                   & 0.3                  & 0.26                 & 
		40.58                    & 48.00                     & 
		88                     & 0.44                      & 
		3534.43                   & 2325.10                 & 
		1157.76                  & 18.29                  & 
		18417                      \\
		50                   & 0.4                  & 0.38                 & 
		76.75                    & 96.25                     & 
		139                    & 0.30                      & 
		4139.52                   & 2889.61                 & 
		1197.40                  & 18.88                  & 
		22951                      \\
		50                   & 0.5                  & 0.47                 & 
		98.82                    & 127.56                    & 
		177                    & 0.28                      & 
		4433.95                   & 3256.09                 & 
		1128.30                  & 15.79                  & 
		26260                      \\
		50                   & 0.6                  & 0.61                 & 
		149.46                   & 189.41                    & 
		315                    & 0.40                      & 
		4604.08                   & 3441.39                 & 
		1109.84                  & 19.08                  & 
		27503                      \\
		50                   & 0.7                  & 0.76                 & 
		214.97                   & 272.53                    & 
		397                    & 0.31                      & 
		5064.41                   & 3916.95                 & 
		1089.12                  & 24.48                  & 
		28401                      \\
		50                   & 0.8                  & 0.79                 & 
		224.27                   & 283.43                    & 
		427                    & 0.33                      & 
		5512.39                   & 4352.64                 & 
		1086.82                  & 38.97                  & 
		29912                      \\
		50                   & 0.9                  & 0.93                 & 
		286.08                   & 368.44                    & 
		550                    & 0.33                      & 
		6983.07                   & 5810.03                 & 
		1118.61                  & 20.22                  & 
		31414                      \\ \bottomrule
	\end{tabular}
\end{table}

\begin{table}[tbph]
		\caption{Bounds and times for the MCP using the MILP formulation on 
		random geometric graphs}
	\label{tab:geometricgraphs-ilp}
	\centering
	\small
	\begin{tabular}{@{}ccrrrr@{}}
		\toprule
		$n$ & $d$ & \multicolumn{1}{c}{LB} & \multicolumn{1}{c}{UB} & 
		\multicolumn{1}{c}{gap} & \multicolumn{1}{c}{time} \\ \midrule
%		&     &                        &                        
%		&                         &                          \\
		20  & 0.3 & 7.00                   & 7                      & 
		0.00                    & 1.51                     \\
		20  & 0.4 & 12.00                  & 12                     & 
		0.00                    & 26.88                 \\
		20  & 0.5 & 22.00                  & 22                     & 
		0.00                    & 6.65                     \\
		20  & 0.6 & 21.00                  & 21                     & 
		0.00                    & 3.86                     \\
		20  & 0.7 & 55.00                  & 55                     & 
		0.00                    & 33.99                 \\
		20  & 0.8 & 83.00                  & 83                     & 
		0.00                    & 284.89                   \\
		20  & 0.9 & 69.00                  & 69                     & 
		0.00                    & 31.36                    \\
		&     &                        &                        
		&                         &                          \\
		30  & 0.3 & 13.00                  & 13                     & 
		0.00                    & 23.97                    \\
		30  & 0.4 & 45.00                  & 45                     & 
		0.00                    & 509.40                   \\
		30  & 0.5 & 60.31                  & 72                     & 
		0.16                    & 7200.00                  \\
		30  & 0.6 & 126.00                 & 126                    & 
		0.00                    & 1236.75                  \\
		30  & 0.7 & 119.00                 & 119                    & 
		0.00                    & 2401.15                  \\
		30  & 0.8 & 107.80                 & 200                    & 
		0.46                  & 7200.00                  \\
		30  & 0.9 & 120.40                 & 207                    & 
		0.42                    & 7200.00                  \\
		&     &                        &                        
		&                         &                          \\
		40  & 0.3 & 31.00                  & 31                     & 
		0.00                    & 1130.25                  \\
		40  & 0.4 & 50.00                  & 50                     & 
		0.00                    & 847.54                   \\
		40  & 0.5 & 92.00                  & 92                     & 
		0.00                    & 1241.06                  \\
		40  & 0.6 & 119.31                 & 126                    & 
		0.05                    & 7200.00                  \\
		40  & 0.7 & 154.66                 & 238                    & 
		0.35                    & 7200.00                  \\
		40  & 0.8 & 166.00                 & 294                    & 
		0.44                    & 7200.00                  \\
		40  & 0.9 & 130.35                 & 366                    & 
		0.64                    & 7200.00                  \\
		&     &                        &                        
		&                         &                          \\
		50  & 0.3 & 57.64                  & 63                     & 
		0.09                    & 7200.00                  \\
		50  & 0.4 & 115.50                 & 131                    & 
		0.12                    & 7200.00                  \\
		50  & 0.5 & 146.40                 & 175                    & 
		0.16                    & 7200.00                  \\
		50  & 0.6 & 162.33                 & 266                    & 
		0.39                    & 7200.00                  \\
		50  & 0.7 & 146.67                 & 398                    & 
		0.63                    & 7200.00                  \\
		50  & 0.8 & 112.01                 & 416                    & 
		0.73                    & 7200.00                  \\
		50  & 0.9 & 115.90                 & 556                    & 
		0.79                    & 7200.00                  \\ \bottomrule
	\end{tabular}
\end{table}

\begin{table}[tbph]
\revision{
	\parbox{.45\linewidth}{	
		\centering
		\small
		\caption{\small{Comparison of the SDP bounds with the optimal values 
		for the MCP	on Erdős–Rényi graphs}}
		\label{tab:erdosrenyi-exactvalues}
		\begin{tabular}{@{}ccrrrc@{}}
			\toprule
			$n$ & $p$& \multicolumn{1}{c}{LB} & 
			\multicolumn{1}{c}{UB} & \multicolumn{1}{c}{opt} & $\frac{\text{opt}- \text{LB}}{\text{opt}}$ \\ 
			\midrule
			20  & 0.3 & 14.27  & 26  & 20    & 0.25 \\
			20  & 0.4 & 21.54  & 36  & 32    & 0.31 \\
			20  & 0.5 & 21.92  & 41  & 31    & 0.29 \\
			20  & 0.6 & 36.40  & 55  & 52    & 0.29 \\
			20  & 0.7 & 38.65  & 57  & 56    & 0.30 \\
			20  & 0.8 & 46.59  & 68  & 68    & 0.31 \\
			20  & 0.9 & 59.47  & 88  & 88    & 0.32 \\
			&&&&& \\
			30  & 0.3 & 30.69  & 48  & 44    & 0.30 \\
			30  & 0.4 & 41.79  & 76  & 60    & 0.30 \\
			30  & 0.5 & 60.55  & 95  & 87    & 0.30 \\
			30  & 0.6 & 70.84  & 101 & 101   & 0.30 \\
			30  & 0.7 & 91.86  & 143 & 135   & 0.32 \\
			&&&&& \\
			40  & 0.3 & 60.12  & 92  & 89    & 0.31 \\
			40  & 0.4 & 85.52  & 126 & 121   & 0.29 \\
			40  & 0.5 & 99.75  & 155 & 147   & 0.32 \\ \bottomrule
		\end{tabular}}
%	\end{table}
%	\begin{table}[tbph]
%		\centering
%		\caption{Comparaison of the SDP bounds with the exact values for the 
%		MCP 
%		\\on geometric graphs}
\quad
\parbox{.45\linewidth}{
	\centering
	\small
	\caption{\small{Comparison of the SDP bounds with the optimal 
	values for 
		the MCP on geometric graphs}}
		\label{tab:geometricgraphs-exactvalues}
		\begin{tabular}{@{}cccrrrc@{}}
			\toprule
			$n$ & $d$ & density & \multicolumn{1}{c}{LB} & 
			\multicolumn{1}{c}{UB} & \multicolumn{1}{c}{opt} &  $\frac{\text{opt}- \text{LB}}{\text{opt}}$\\ 
			\midrule
			20  & 0.3 & 0.18    & 5.24                   & 
			9                      & 7                         & 0.14 \\
			20  & 0.4 & 0.28    & 8.28                   & 
			13                     & 12                        & 0.25 \\
			20  & 0.5 & 0.42    & 16.50                  & 
			24                     & 22                        & 0.23 \\
			20  & 0.6 & 0.44    & 16.56                  & 
			24                     & 21                        & 0.19 \\
			20  & 0.7 & 0.72    & 39.83                  & 
			67                     & 55                        & 0.27 \\
			20  & 0.8 & 0.91    & 56.75                  & 
			83                     & 83                        & 0.31 \\
			20  & 0.9 & 0.83    & 48.57                  & 
			69                     & 69                        & 0.29 \\
			&     &         &                        &                        
			&                           &      \\
			30  & 0.3 & 0.19    & 10.46                  & 
			19                     & 13                        & 0.15 \\
			30  & 0.4 & 0.35    & 32.72                  & 
			65                     & 45                        & 0.27 \\
			30  & 0.5 & 0.48    & 51.65                  & 
			79                     & 72                        & 0.28 \\
			30  & 0.6 & 0.75    & 92.20                  & 
			136                    & 126                       & 0.26 \\
			30  & 0.7 & 0.70    & 84.98                  & 
			121                    & 119                       & 0.29 \\
			&     &         &                        &                        
			&                           &      \\
			40  & 0.3 & 0.21    & 22.48                  & 
			34                     & 31                        & 0.26 \\
			40  & 0.4 & 0.31    & 39.98                  & 
			61                     & 50                        & 0.20 \\
			40  & 0.5 & 0.44    & 69.41                  & 
			149                    & 92                        & 0.24 \\
			\bottomrule         
		\end{tabular}
	}
}
	\end{table}

\section{Conclusion and outlook}\label{sec:conclusion}
We presented an SDP based approach to compute lower and upper bounds
on the cutwidth. In order to obtain tight bounds, we
derive several classes of valid inequalities and equalities and a heuristic for
separating those inequalities and equalities and add them iteratively in a 
cutting-plane
fashion to the SDP relaxation. The solution obtained from the SDP
relaxation also serves to 
compute an upper bound on the cutwidth. Our experiments show that we
obtain high quality bounds in reasonable time. In particular, our
method is by far not as sensitive to varying densities as MILP
approaches. 

As the number of vertices gets larger, solving the SDPs becomes the
time consuming part and thus the bottleneck. This is due to the
increasing size of the matrix, but even more due to the huge number of
constraints. Therefore, in our future research we will investigate
using alternating direction methods of multipliers (ADMM) instead of
an interior-point method, as ADMM proved to be successful in solving SDPs
with many constraints, see for example~\cite{demeijer2023admm}. 
Also including our bounds in a branch-and-bound framework and thereby having an 
exact solution method is a promising future research direction.

In our experiments we observe that we were not able to push the gap
significantly 
below $0.30$. We want to further investigate this to find theoretical
evidence of this behavior.
And finally, we want to apply our approach to computing related graph
parameters, i.e., those that can also be formulated in terms of
orderings of the vertices, like the treewidth or the pathwidth of a graph.

\clearpage
\bibliographystyle{plainnat}
\bibliography{papersCutwidth}

\end{document}